\definecolor{antiquegold}{RGB}{205,127,50} 
\definecolor{deepskyblue}{RGB}{0,191,255} 
\definecolor{crimson}{RGB}{220,20,60} 
\definecolor{orange}{RGB}{255,165,0} 
\definecolor{green}{RGB}{0,128,0}
\newcommand{\diag}{\mathop{\mathbf{diag}}}
\newcommand{\abs}[1]{\ensuremath{\left|#1\right|}}
\newcommand{\norm}[2][]{\ensuremath{\left\Vert #2 \right\Vert}}
\renewcommand{\vec}[1]{\mathbf{#1}}
\newcommand{\vect}[1]{\boldsymbol{\mathbf{#1}}}
\newcommand{\argmin}{\mathop{\mathbf{argmin}}}
\newcommand{\vol}{\mathrm{Vol}}
\newcommand{\grad}{\mathrm{grad}}
\newcommand{\Retr}{\mathrm{Retr}}
\newcommand{\Exp}{\mathrm{Exp}}
\newcommand{\Div}{\mathrm{div}}
     \newcommand{\BR}{{\mathbb {R}}}
 \newcommand{\cmark}{\ding{51}}%
\newcommand{\xmark}{\ding{55}}%
    \theoremstyle{plain}
    \newtheorem{thm}{Theorem}[section]
\theoremstyle{plain}
\newtheorem{theorem}{Theorem}[section]
\newtheorem{lemma}[theorem]{Lemma}
\newtheorem{proposition}[theorem]{Proposition}
\theoremstyle{definition}
\newtheorem{assumption}{Assumption}
\title{Langevin Multiplicative Weights Update \\ with Applications in Polynomial Portfolio Management}
\author{
    %Authors
    % All authors must be in the same font size and format.
%    Written by AAAI Press Staff\textsuperscript{\rm 1}\thanks{With help from the AAAI Publications Committee.}\\
%   AAAI Style Contributions by Pater Patel Schneider,
%    Sunil Issar,\\
    Yi Feng \equalcontrib \textsuperscript{\rm 1},
    Xiao Wang \equalcontrib \thanks{Correspondence to Xiao Wang.} \textsuperscript{\rm 1},
    Tian Xie \equalcontrib \textsuperscript{\rm 2} \\
}
\title{My Publication Title --- Single Author}
\author {
    Author Name
}
\title{My Publication Title --- Multiple Authors}
\author {
    % Authors
    First Author Name\textsuperscript{\rm 1},
    Second Author Name\textsuperscript{\rm 2},
    Third Author Name\textsuperscript{\rm 1}
}
\begin{document}

\maketitle

\begin{abstract}
 We consider nonconvex optimization problem over simplex, and more generally, a product of simplices. We provide an algorithm, Langevin Multiplicative Weights Update (LMWU) for solving global optimization problems by adding a noise scaling with the non-Euclidean geometry in the simplex. Non-convex optimization has been extensively studied by machine learning community due to its application in various scenarios such as neural network approximation and finding Nash equilibrium. Despite recent progresses on provable guarantee of escaping and avoiding saddle point (convergence to local minima) and global convergence of Langevin gradient based method without constraints, the global optimization with constraints is less studied. We show that LMWU algorithm is provably convergent to interior global minima with a non-asymptotic convergence analysis. We verify the efficiency of the proposed algorithm in real data set from polynomial portfolio management, where optimization of a highly non-linear objective function plays a crucial role.
\end{abstract}

% Uncomment the following to link to your code, datasets, an extended version or similar.
%
% \begin{links}
%     \link{Code}{https://aaai.org/example/code}
%     \link{Datasets}{https://aaai.org/example/datasets}
%     \link{Extended version}{https://aaai.org/example/extended-version}
% \end{links}

\section{Introduction}
In this paper we consider nonconvex optimization problem with constraint that is a product of simplices, i.e.,
\begin{equation}\label{setup}
\min_{\vec{x}\in\Delta_1\times...\times\Delta_N} f(\vec{x})
\end{equation}
where $f:\Delta_1\times...\times\Delta_N\rightarrow\mathbb{R}$ is a sufficiently smooth function and 
\[
\Delta_i=\left\{(x_{i1},...,x_{id}):\sum_{s=1}^dx_{is}=1,x_{is}\ge 0\right\}.
\]
Problem \eqref{setup} appears naturally in potential game \cite{Shapley}, i.e., the incentive of all players to change their strategy can be expressed using a single global function (the potential function). A natural approach is to use projected gradient descent, but computing the projection at every iteration might not be an easy task to accomplish. An alternate effective algorithm in solving problem \eqref{setup} is so called Multiplicative Weights Update (MWU) \cite{AHK12}, which is a special case of FTRL that is commonly used in min-max optimization and multi-agent systems \cite{Lei2021,FLPW24,FPW2024}. Result of \cite{ICMLPPW19} indicates that MWU almost always converges to second-order stationary points with random initialization. Besides MWU, many first-order methods have been proven escaping saddle points or avoiding saddle points asymptotically \cite{GHJY2015,Jin17,JNJ18,LSJR16,LP19,panageas2016gradient,CB19,SFF19,PPW19a,SLQ19}.

However, in the nonconvex world, finding local minima can be far away from achieving global minima. 
The classic MWU together with its accelerated variant \cite{FPW2022} can only converge to second-order stationary points or interior local minima, and this leaves finding global optima a challenging direction. One approach in designing first-order algorithm converging to global minima is to introduce a random noise into gradient descent, so that the algorithm has a chance to escape local minima. In recent years, progress has been made on this via the Langevin algorithm, an algorithm originally invented to sample from a target distribution proportional to $e^{-f(\vec{x})}$ where $f(\vec{x})$ is objective function defined on the whole Euclidean space $\mathbb{R}^n$. Successfully, global convergence of Langevin gradient descent with non-asymptotic convergence rate are obtained in \cite{RRT17,XCZG18}. More recently, projected Langevin algorithm has been investigated in \cite{Lamperski} from the perspective of constrained sampling and optimization.

Despite aforementioned progresses in local and global convergence of gradient based algorithms, it is less understood whether there exists an algorithm that naturally fits distributed optimization framework from game theory and multi-agent systems. It is indicated in \cite{BP2019} that projected gradient descent can spend a few steps at each corner if the constraint has lower dimension. This feature makes projected gradient descent in multi-agent systems less effective, and in contrast, MWU and its variants have proven prominent in learning of games, their behaviors have been extensively studied in literatures, e.g., \cite{PPP2017,BG18,Cheung,ChGP19,CP2020}. Nevertheless, finding global minima of potential games with MWU or any of its variant seems missing in literature.

\begin{table*}
\centering
\begin{tabular}{ |p{7.8cm}||p{2.2cm}|p{1.5cm}|p{1.5cm}| p{1.7cm} |}
 %\hline
 \multicolumn{4}{c}{}
   \\
 \hline
 & Global Convergence & Constraints& Simple Projection &Distributed Constraints\\
 \hline
MWU  \cite{ICMLPPW19}   & \xmark    &\cmark& \cmark  &\cmark\\
Langevin GD \cite{RRT17}&  \cmark   & \xmark  &\xmark &\xmark\\
Projected Langevin \cite{Lamperski} &\cmark & \cmark&  \xmark& \xmark\\
Perturbed RGD  \cite{CB19}   &\xmark & \cmark&  \cmark&\xmark\\
Accelerated MWU  \cite{FPW2022} &\xmark & \cmark & \cmark &\cmark\\
Langevin MWU (this work)&   \cmark  & \cmark&\cmark &\cmark\\
 \hline
\end{tabular}
\caption{Comparison to related results}
\label{tab:comparison} % 表格编号标签
\end{table*}

Motivated by global convergence analysis of Langevin gradient descent algorithm \cite{RRT17}, we propose a scheme of adding noise that is scalable with a natural geometry of simplex, so that the Langevin Multiplicative Weights Update algorithm (LMWU) enjoys both the efficiency of projecting onto the constraint and the power of escaping saddle points and spurious local minima. LMWU is derived from the geometric Brownian motion on Riemannian manifold, where the natural geometry of simplex, i.e., Shahshahani geometry, plays a crucial role. The main result is stated as follows, and our contributions compared to the most relevant results in literature are illustrated in above table.

\begin{thm}[Informal]
Suppose the global optima of Problem \eqref{setup} is in the interior of the constraints. The Langevin Multiplicative Weights Update converges to the biased global optima in expectation.
\end{thm}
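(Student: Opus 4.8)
The plan is to realize LMWU as the discretization of a continuous-time Riemannian Langevin diffusion on the open product of simplices, and then to chain together three estimates in the spirit of \cite{RRT17}: an ergodicity bound for the diffusion, a discretization bound relating the algorithm to the diffusion, and a concentration bound for the Gibbs measure. The informal "bias" is exactly the irreducible gap between the expected value under the finite-temperature stationary law and $\min f$.

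First I would set up the continuous dynamics. Equipping the interior of $\Delta_1\times\cdots\times\Delta_N$ with the Shahshahani metric $g_{\vec{x}}(u,v)=\sum_{i,s} u_{is}v_{is}/x_{is}$, I would write the associated Langevin SDE $d\vec{X}_t=-\grad f(\vec{X}_t)\,dt+\sqrt{2\be}\,dB^{\M}_t$, where $\grad$ is the Riemannian gradient and $B^{\M}$ is Brownian motion for $g$. The first task is to compute the generator in simplex coordinates and verify, via the Fokker--Planck equation, that the invariant measure is the Gibbs measure $\pi_\beta\propto e^{-\beta f}\,dV_g$ with respect to the Riemannian volume. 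Here I would exploit the key structural fact that under the Shahshahani metric the boundary lies at infinite distance, so the diffusion stays in the interior for all time and no reflecting boundary term is needed; this is what lets the multiplicative update keep iterates feasible without projection. I would then identify LMWU as the exponential-Euler discretization of this SDE, so that the multiplicative weights step is precisely the manifold exponential map along the noisy gradient direction.

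Next I would control the two error terms. For ergodicity I would prove a Poincar\'e or log-Sobolev inequality for $\pi_\beta$, giving exponential convergence of the law of $\vec{X}_t$ to $\pi_\beta$; since the manifold is non-compact in the Shahshahani metric, I would obtain this through a Lyapunov-function argument, passing to the logit coordinates $y_{is}=\log x_{is}$ in which the diffusion looks Euclidean and the confining behavior of $f$ can be expressed as a dissipativity-type condition. For the discretization term I would bound the gap between the law of the $k$-th LMWU iterate and $\vec{X}_{k\eta}$ in terms of the step size $\eta$ and the smoothness of $f$, transporting the standard Euler--Maruyama one-step analysis through the $y$-coordinate chart. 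Finally, for the optimization bias I would invoke the interior-minimizer hypothesis: since the global minimizer is interior, $dV_g$ is bounded above and below near it, and Laplace's method yields $\mathbb{E}_{\pi_\beta}[f]-\min f=O(1/\beta)$. Combining the three estimates by the triangle inequality bounds $\mathbb{E}[f(\vec{X}_k)]-\min f$ by a quantity that vanishes as $k\to\infty$, $\eta\to0$, and $\beta\to\infty$, which is the asserted convergence to the biased global optimum.

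The main obstacle will be the ergodicity step on the non-compact Shahshahani manifold: unlike the compact or dissipative-Euclidean settings, one must control the diffusion as it approaches the simplex boundary, where the metric degenerates and coefficients blow up. I expect the cleanest route is to work in the logit coordinates, show that the transformed potential is confining in the sense analogous to $\langle\nabla f,\vec{x}\rangle\ge m\|\vec{x}\|^2-b$ in \cite{RRT17}, and thereby transfer the uniform spectral-gap machinery to the simplex. A secondary difficulty is that both the Riemannian discretization error and the Gibbs concentration depend on curvature and on the volume distortion $dV_g$; keeping these uniformly bounded, and letting the interior assumption do the work of preventing mass from leaking to the boundary, is where the analysis is most delicate.
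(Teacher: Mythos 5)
Your overall architecture coincides with the paper's: both decompose $\mathbb{E}f(\vec{x}_k)-f^*$ into a sampling error (law of the $k$-th iterate versus the Gibbs measure $\nu\propto e^{-\beta f}$) plus the bias $\mathbb{E}_{\nu}f-f^*$, and both handle the bias by a Laplace-type argument enabled by the interior-minimizer hypothesis. The genuine difference is in the sampling-error half. You follow the original three-term route of \cite{RRT17}: continuous-time ergodicity of the Shahshahani Langevin diffusion, with a log-Sobolev/Poincar\'e inequality to be \emph{proved} via a Lyapunov function in logit coordinates, plus a separate Euler--Maruyama-style discretization bound. The paper instead \emph{assumes} the LSI for $e^{-\beta f}$ outright (Assumption \ref{A5}), observes that the simplex with the Shahshahani metric is a Hessian manifold generated by $\phi=\sum_i x_i\ln x_i$, and invokes a discrete-time result of \cite{GV2022} that directly yields $H(\rho_k|\nu)\le e^{-\frac{3}{16}\alpha\epsilon k}+C\epsilon/\alpha$ for the iterates, so ergodicity and discretization come packaged in one black box; the conversion to $\abs{\mathbb{E}f(\vec{x}_k)-\mathbb{E}_{\nu}f}$ then goes through Talagrand's inequality and a Riemannian generalization of Lemma 6 of \cite{RRT17} (Lemma \ref{lemma:convergence1}, proved by a geodesic mean-value inequality plus a coupling and Cauchy--Schwarz). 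Your route is more self-contained and would give a strictly stronger result if the Lyapunov/LSI step goes through, but that step is precisely the hard part on the non-compact Shahshahani geometry and the paper buys tractability by assuming it. Two smaller points of friction: (i) the algorithm as stated is the \emph{linear normalization retraction}, not the exponential map (exponential-weights) update, so your identification of LMWU with the exponential-Euler scheme is off by a retraction error that your one-step discretization analysis would have to absorb; (ii) the paper's bias bound is not a clean $O(1/\beta)$ but $\frac{K}{\beta}+\frac{1}{\beta}\log(\mathrm{poly}(\beta^{-1})^{-1})$, obtained in Lemma \ref{lemma:convergence2} by an explicit geodesic polar-coordinate evaluation of $\int_M e^{-c\,d(\vec{1},\vec{x})^2}d\vol$ with curvature corrections --- your Laplace-method sketch needs this (or something like it) because the Shahshahani volume form is not uniformly comparable to Lebesgue measure across the simplex.
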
 

\paragraph{Other related works.}There have been considerably amount of works in convergence to local and global optima with first-order methods. Apart from the references listed in Table 1, we give a relatively complete review on the literatures about local and global convergence with gradient descent and Langevin algorithms. Local convergence guarantee with non-asymptotic convergence rate are investigated in \cite{GHJY2015,Jin17,JNJ18,SFF19,He2024}.Asymptotic convergence to local optima is studied with techniques from dynamical systems, typical references include \cite{LP19,LSJR16,AMPW2022}. On the other hand, convergence of Langevin algorithm in sampling has attracted many attentions. When the target distribution is log-concave, Euler discretization converges rapidly \cite{RT96,Dal17}. Later on the convergence rate was improved in \cite{DM17}. More recently, rapid convergence of Langevin algorithm for distributions satisfying log-Sobolev inequality has been established in \cite{VW19,LE20,WLP20,GV2022}. An improved rate analysis for Langevin SGD with variance reduction is provided in \cite{KS22} For sampling in a constrained set, Mirror Langevin diffusion has been studied in \cite{ZPFP20,HKRC18,AC21,Jia21,LTVW21}. A Reflected Langevin algorithm is proposed and analyzed in \cite{STKS22}, we need to mention that the reflected operation has an projection operation embedded, which makes it difficulty to apply the algorithm in simplicial constraint. 

\section{Preliminaries}
This section reviews the main background on Riemannian geometry and probability distributions on manifolds.
\subsection{Riemannian Geometry}
\paragraph{Riemannian metric and exponential map.}A Riemannian manifold $(M,g)$ is real, smooth manifold $M$ equipped with a Riemannian metric $g$. For each $\vec{x}\in M$, let $T_{\vec{x}}M$ denote the tangent space at $\vec{x}$. The metric $g$ induces a inner product $\langle\cdot,\cdot\rangle_{\vec{x}}:T_{\vec{x}}M\times T_{\vec{x}}M\rightarrow\mathbb{R}$. We call a curve $\gamma(t):[0,1]\rightarrow M$ a geodesic if it satisfies 
\begin{itemize}
\item The curve $\gamma(t)$ is parametrized with constant speed, i.e. $\norm{\frac{d}{dt}\gamma(t)}_{\gamma(t)}$ is constant for $t\in[0,1]$.
\item The curve is locally length minimized between $\gamma(0)$ and $\gamma(1)$.
\end{itemize}
\begin{comment}
The exponential map $\Exp_{\vec{x}}(\vec{v})$ maps $\vec{v}\in T_{\vec{x}}M$ to $\vec{y}\in M$ such that there exists a geodesic $\gamma$ with $\gamma(0)=\vec{x}$, $\gamma(1)=\vec{y}$ and $\gamma'(0)=\vec{v}$. The existence and uniqueness of geodesic is guaranteed by the fundamental theorem of ODE, one cannot have an explicit form of the geodesic in general. In many applications, we need to apply numerical methods to approximate the solution of the geodesic equation. But in some special cases, like sphere and Lorentz model for hyperbolic space, exponential map has closed forms. Simplex with Shahshahani metric also belong to this category.
\end{comment}

\paragraph{Riemannian gradient.}
 For differentiable function $f:M\rightarrow\BR$, $\grad f(\vec{x})\in T_{\vec{x}}M$ denotes the Riemannian gradient of $f$ that satisfies $\frac{d}{dt}f(\gamma(t))=\langle\gamma'(t),\grad f(\vec{x})\rangle$ for any differentiable curve $\gamma(t)$ passing through $\vec{x}$. The local coordinate expression of gradient is useful in our analysis.
\begin{equation}\label{grad}
\grad f(\vec{x})=\left(\sum_jg^{1j}(\vec{x})\frac{\partial f}{\partial x_j},...,\sum_jg^{dj}(\vec{x})\frac{\partial f}{\partial x_j}\right)
\end{equation}
where $g^{ij}(\vec{x})$ is the $ij$-th entry of the inverse of the metric matrix $\{g_{ij}(\vec{x})\}$ at each point. 

\paragraph{Retraction.} A retraction on a manifold $M$ is a smooth mapping $\Retr$ from the tangent bundle $TM$ to $M$ satisfying properties 1 and 2 below: Let $\Retr_{\vec{x}}:T_{\vec{x}}M\rightarrow M$ denote the restriction of $Retr$ to $T_{\vec{x}}M$.
\begin{enumerate}
\item $\Retr_{\vec{x}}(0)=\vec{x}$, where $0$ is the zero vector in $T_{\vec{x}}M$.
\item The differential of $\Retr_{\vec{x}}$ at $0$ is the identity map.
\end{enumerate}
Then the Riemannian gradient descent with stepsize $\alpha $ is given as
\begin{equation}\label{GD:Riemannian}
\vec{x}_{t+1}=\Retr_{\vec{x}_t}(-\epsilon\grad f(\vec{x}_t)).
\end{equation}

\subsection{Distributions on manifold}
\paragraph{KL divergence.}Let $\rho$ and $\nu$ be probability distributions on $M$ that is absolutely continuous with respect to the Riemannian volume measure on $M$ (denoted as $d\vec{x}$). The \emph{Kullback-Leibler} (KL) divergence of $\rho$ with respect to $\nu$ is 
\[
H(\rho|\nu)=\int_M\rho(\vec{x})\log\frac{\rho(\vec{x})}{\nu(\vec{x})}d\vec{x}
\]
KL-divergence measures the ``distance" between two probability distributions. Note that KL-divergene is nonnegative: $H(\rho|\nu)\ge 0$, and it is minimized at the target distribution, i.e., $H(\rho|\nu)=0$ if and only if $\rho=\nu$. Furthermore, $\nu$ is the only stationary point of $H(\cdot|\nu)$, and thus sampling from $\nu$ can be reduced to minimizing $H(\cdot|\nu)$. Note that if $\nu=e^{-\beta f}$, the KL-divergence can be decomposed into 
\[
H(\rho|\nu)=\mathbb{E}_{\rho}f+\mathcal{H}(\rho),
\]
where $\mathbb{E}_{\rho}f=\int_M\rho fd\vol$ is the expected value of $f$ and $\mathcal{H}(\rho)=-\int_M\rho\log\rho d\vol$ is the differential entropy of $\rho$.
\paragraph{Wasserstein distance.}The Wasserstein distance between $\mu$ and $\nu$ is defined to be
\[
\inf\{\sqrt{\mathbb{E}[d(X,Y)^2]}:\text{law}(X)=\mu,\text{law}(Y)=\nu\}
\]

\paragraph{Log-Sobolev inequality.} A probability measure $\mu$ on $M$ is called to satisfy the logarithmic Sobolev inequality if there exists a constant $\alpha>0$ such that

\begin{align}
&\int_Mg^2\log g^2d\nu-\left(\int_Mg^2d\nu\right)\log\left(\int_Mg^2d\nu\right)\notag\\
&\le\frac{2}{\alpha}\int_M\norm{\grad g}^2d\nu
\end{align}

for all smooth functions $g:M\rightarrow\mathbb{R}$ with $\int_Mg^2\le\infty$. 
The relative Fisher information of $\rho$ with respect to $\nu$ is $I_{\nu}(\rho)=\int_M\rho(\vec{x})\norm{\grad\log\frac{\rho(\vec{x})}{\nu(\vec{x})}}d\vol$. Log-Sobolev inequality (LSI) is equivalent to the relation between KL-divergence and Fisher information: $H(\rho|\nu)\le\frac{1}{2\alpha}I_{\nu}(\rho)$.

\section{Main Results}
In this section, we review classic Multiplicative Weights Update and its linear variant, and then some well known facts about Shahshahani geometry will be discussed. Based on the geometric setting of the simplex, we give a sketched framework how the Langevin Multiplicative Weights Update is derived.

\subsection{From MWU to Langevin MWU} 
 The classic Multiplicative Weights Update  is widely used in constrained optimization, multi-agent system and game theory. It often refers to two forms, 
\[
x_{ij}(k+1)=\frac{x_{ij}(k)e^{-\epsilon\frac{\partial f}{\partial x_{ij}}}}{\sum_s x_{is}(k)e^{-\epsilon\frac{\partial f}{\partial x_{is}}}}
\]
and its linear variant.
If not specified, This paper refers MWU to the linear variant. For completeness, we recall the linear variant of MWU. Suppose that $\vec{x}_i=(x_{i1},...,x_{id_i})$ is in the $i$-th component of $\Delta_1\times...\times\Delta_n$. Assume that $\vec{x}(k)$ is the $k$-th iterate of MWU, the algorithm is written as follows:
\begin{equation}\label{MWUclassic}
x_{ij}(k+1)=x_{ij}(k)\frac{1-\epsilon \frac{\partial f}{\partial x_{ij}}}{1-\epsilon\sum_{s}x_{is}(k)\frac{\partial f}{\partial x_{is}}},
\end{equation}
where $j\in\{1,...,d_i\}$.

\begin{algorithm}[t]
\caption{Langevin-MWU (single-agent) }
\label{alg:C}
\begin{algorithmic}
\STATE Input : error threshold $\delta>0$, large enough $\beta>0$,
\\
Compute step size $\epsilon<\frac{\delta^2\alpha}{8C(\frac{M}{2}\sigma+B)}$,
\\Initialize $\vec{x}_0\sim \rho_0$, 
\\
\REPEAT
\STATE Compute $S_{\vec{x}}=\sum_{j=1}^n\frac{1}{x_j}$, and $z_0^i\sim\mathcal{N}(0,1)$.
\\
Compute \\
$V_0^i=\frac{\epsilon}{2\beta}\left(n+1-(1+x_i)S_{\vec{x}}\right)+\sqrt{2\epsilon\beta^{-1} x_i}z_0^i$
\\
Set $x_i\leftarrow\frac{x_i-\epsilon x_i\frac{\partial f}{\partial x_i}+V_0^i}{1-\epsilon\sum_{j=1}^nx_j\frac{\partial f}{\partial x_j}+\sum_{j=1}^nV_0^j}$
\UNTIL{$k$ large enough, e.g., $k>\frac{16}{3\epsilon}\left(\frac{16(\frac{M}{2}\sigma+B)^2}{\delta_2\alpha}\right)$ }
\end{algorithmic}
\end{algorithm}

It is well known that Langevin dynamics corresponds to the gradient flow of relative entropy respect to Wasserstein metric. In the space of measures with the Wasserstein metric, the gradient flow of relative entropy is the following partial differential equation, called Entropy Regularized Wasserstein Gradient Flow:
\[
\frac{\partial \rho}{\partial t}=\nabla \cdot\left(\rho\nabla f\right)+\beta^{-1}\Delta\rho
\]
The key step in deriving Langevin Multiplicative Weights Update is to implement or approximate the noise scaled with the Shahshahani geometry in the simplex, which is a discretization of geometric Brownian motion in Shahshahani manifold. The geometric Brownian motion inside of the simplex $\Delta_+^{d-1}\subset\mathbb{R}_+^d$ can be obtained from the orthogonal projection of the geometric Brownian motion in $\mathbb{R}_+^d$, where the orthogonal projection is with respect to the Shahshahani metric in $\mathbb{R}_+^d$. Recall the standard Brownian motion in $\mathbb{R}^d$ is a random process $\{X_t\}_{t\ge 0}$ whose density function $\rho(\vec{x},t)$ evolves according to the diffusion equation 
\[
\frac{\partial\rho(\vec{x},t)}{\partial t}=\beta^{-1}\Delta\rho(\vec{x},t).
\]
The Brownian motion in Shahshahani manifold $\mathbb{R}_+^d$ is a random process $\{W_t\}_{t\ge 0}$ whose density function evolves according to the diffusion equation with respect to the Laplace-Beltrami operator, i.e.,
\[
\frac{\rho(\vec{x},t)}{\partial t}=\beta^{-1}\Delta_M\rho(\vec{x},t).
\]
Since $\mathbb{R}_+^d$ serves as its own local coordinate system as a Riemannian manifold, the geometric Brownian motion in $\mathbb{R}_+^d$ is described by the following stochastic differential equation 
\[
dX_t=-\beta^{-1}g^{ij}\Gamma_{ij}^kdt+\sqrt{2\beta^{-1}g^{-1}}dB_t
\]
where $dB_t$ is the standard Brownian motion in Euclidean space, $g^{ij}$ is the $(ij)$-entry of the inverse matrix of Shahshahani metric matrix $g_{ij}$, and $\Gamma_{ij}^k$ is the Christoffel symbol of Shahshahani metric that can be calculated explicitly. After establishing the noise discretized from Shahshahani geometric Brownian, we combine the noise and the Riemannian gradient in $\mathbb{R}_+^d$ to finalize the incremental vector in the update rule. We leave the details in Appendix. 

\subsection{Main Theorem}

In this section, we firstly state our main theorem that asserts the convergence in expectation of the L-MWU algorithm. Secondly, we will sketch the proof strategies, i.e., decomposition of error $\mathbb{E}f(\vec{x}_k)-f^*$ into 
\begin{equation}\label{decomp:error}
\mathbb{E}f(\vec{x}_k)-\mathbb{E}_{\nu}f+\mathbb{E}_{\nu}f-f^*
\end{equation}
where the expectation $\mathbb{E}_{\nu}f=\int_Mf(\vec{x})\nu(\vec{x}) d\vol$ and $f^*$ is the global minimum of $f(\vec{x})$ over $M$ and $\nu(\vec{x})$ is the probability density function that is proportional to $e^{-\beta f}$. We start presenting the main theorem by some a brief discussion on assumptions used in theoretical analysis.

%\subsection{Main theorem}
Our analysis relies heavily on the theory of global convergence for Langevin algorithm in Euclidean space \cite{RRT17,XCZG18} and the results of rapid convergence results for log-Sobolev distributions such as \cite{GV2022}. Our strategy of giving theoretical analysis is to relate the assumptions in \cite{RRT17} to  the case of Shahshahani manifold, and then generalize the arguments in Euclidean space to Riemannian manifold with special structure. The reason that one can generalize the results in Euclidean space to Shahshahani manifold is the possibility of geometrizing the analytic assumption on $f$ by identifying $\vec{0}$ in $\mathbb{R}^n$ with $\frac{1}{n}(1,...,1)$ in $\Delta^{n-1}$, and the interior of $\Delta^{n-1}$ is diffeomorphic to $\mathbb{R}^{n-1}$. We start by giving assumptions function $f$ satisfies.

\begin{assumption}\label{A1}
 The function $f$ takes nonnegative real values, and there exist constants $A,B\ge 0$, such that
\[
\abs{f\left(\vec{1}\right)}\le A \ \ \text{and}\ \ \norm{\grad f\left(\vec{1}\right)}\le B.
\]
\end{assumption}
This assumption comes from assumption (A.1) in \cite{RRT17} by relating $\vec{0}$ to $\vec{1}=\frac{1}{n}(1,...,1)$.
\begin{assumption}\label{A2}
 Function $f$ is $M$-smooth for some $M>0$, i.e.,
 \[
 \norm{\grad f(\vec{y})-\Gamma_{\vec{x}}^{\vec{y}}f(\vec{x})}\le Md(\vec{x},\vec{y}) \ \ \text{for all}\ \ \vec{x},\vec{y}\in \mathcal{M},
 \]
 where $\Gamma_{\vec{x}}^{\vec{y}}$ denotes the parallel transport from $\vec{x}$ to $\vec{y}$. The gradient satisfies 
 \[
\norm{ \grad f(\vec{x})}_{\vec{x}}\le \frac{M}{2}d(\vec{1},\vec{x})+B
 \]
 for some constants $M>0$ and $B>0$.
 \end{assumption}
 $M$-smoothness in Euclidean setting reads as $\norm{\nabla f(\vec{x})-f(\vec{y})}\le M\norm{\vec{x}-\vec{y}}$, which is commonly assumed in many theoretical analysis. 
 
 \begin{assumption}\label{A3}
There exist positive numbers $m$ and $b$ such that
\[
\langle\grad f(\vec{x}),d(\vec{1},\vec{x})\vec{v}\rangle_{\vec{x}}\ge md(\vec{1},\vec{x})^2-b.
\]
where $\vec{v}$ is the velocity vector of the geodesic connecting $\vec{1}$ and $\vec{x}$. 
\end{assumption}
By a constant speed geodesic we mean the velocity has unit length everywhere, so the term $d(\vec{1},\vec{x})\vec{v}$ can be reduced to $\vec{x}$ in Euclidean space, where $\vec{x}$ means the geodesic of length $\norm{\vec{x}}$ (straight line) connecting $\vec{0}$ and $\vec{x}$.

\begin{assumption}\label{A4}
The differential entropy of the distribution $e^{-\beta f}$ is bounded by a constant $K$.
\end{assumption}
In the case of Euclidean space, the differential entropy has an upper bound by estimating the second moment of Gibbs distribution \cite{RRT17}. The differential entropy of a probability density with a finite second moment is upper-bounded by that of a Gaussian density with the same second moment, $h(\nu)\le\frac{d}{2}\log\left(\frac{2\pi e(b+d/\beta)}{md}\right)$. Thus there exists an upper-bound for $\beta$ large enough.
\begin{assumption}\label{A5}
$e^{-\beta f}$ satisfies log-Sobolev inequality.
\end{assumption}
This condition is necessary in bounding the sampling algorithm converges rapidly.

\begin{thm}
Suppose $f$ satisfies Assumptions \ref{A1}-\ref{A5}. Then there exists constant $C$, such that
\begin{align}
\abs{\mathbb{E}f(\vec{x}_k)-f^*}\le &(\frac{M}{2}\sigma+B)\sqrt{\frac{2}{\alpha}}\left(e^{-\frac{3}{16}\alpha\epsilon k}+\frac{C\epsilon}{\alpha}\right)^{\frac{1}{2}} \notag\\
&+\frac{K}{\beta}+\frac{1}{\beta}\log\left(\text{poly}(\beta^{-1})^{-1}\right).
\end{align}
\end{thm}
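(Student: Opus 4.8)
The plan is to follow the two-part error decomposition already announced in \eqref{decomp:error}, writing
\[
\abs{\mathbb{E}f(\vec{x}_k)-f^*}\le \abs{\mathbb{E}_{\rho_k}f-\mathbb{E}_{\nu}f}+\abs{\mathbb{E}_{\nu}f-f^*},
\]
where $\rho_k$ is the law of the $k$-th LMWU iterate and $\nu\propto e^{-\beta f}$ is the Gibbs distribution on the interior of the simplex. The first term is the \emph{sampling error} and the second the \emph{Gibbs bias}; they produce the first and second lines of the claimed bound respectively. Throughout I would exploit the diffeomorphism between the interior of $\Delta^{n-1}$ and $\mathbb{R}^{n-1}$ (identifying $\vec{1}=\tfrac1n(1,\dots,1)$ with the origin), so that the geometrized Assumptions \ref{A1}--\ref{A3} play exactly the roles of (A.1)--(A.3) of \cite{RRT17}, letting me import the Euclidean Langevin machinery with the Shahshahani metric inserted.

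For the sampling error I would first argue that the LMWU update is a discretization of the Shahshahani geometric Brownian motion with drift $-\grad f$, so that its continuous-time limit is the Langevin diffusion whose invariant measure is exactly $\nu$. Then, using the log-Sobolev inequality of Assumption \ref{A5} with constant $\alpha$, I would run the interpolation argument of \cite{VW19,GV2022}: differentiate $H(\rho_t|\nu)$ along the interpolated process, bound its time derivative by $-c\,\alpha\,I_{\nu}(\rho_t)+(\text{discretization error})$, and use the LSI $H\le\frac{1}{2\alpha}I$ to close a Gr\"onwall inequality. After $k$ steps of size $\epsilon$ this yields
\[
H(\rho_k|\nu)\le e^{-\frac{3}{16}\alpha\epsilon k}\,H(\rho_0|\nu)+\frac{C\epsilon}{\alpha},
\]
where the $\frac{C\epsilon}{\alpha}$ term collects the one-step discretization bias (the quantity controlled by the stepsize restriction in Algorithm~\ref{alg:C}). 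Converting KL to Wasserstein by the Talagrand inequality implied by LSI, $W_2(\rho_k,\nu)\le\sqrt{\frac{2}{\alpha}H(\rho_k|\nu)}$, and then bounding $\abs{\mathbb{E}_{\rho_k}f-\mathbb{E}_{\nu}f}$ by the expected Lipschitz constant of $f$ times $W_2$, using the growth bound $\norm{\grad f(\vec{x})}\le\frac{M}{2}d(\vec{1},\vec{x})+B$ of Assumption \ref{A2} together with a uniform second-moment bound $\sigma$ on $d(\vec{1},\vec{x})$, gives the factor $(\frac{M}{2}\sigma+B)\sqrt{2/\alpha}$ and reproduces the entire first line.

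For the Gibbs bias I would use the free-energy identity $\mathbb{E}_{\nu}f=-\frac1\beta\log Z+\frac1\beta\mathcal{H}(\nu)$, where $Z=\int_M e^{-\beta f}\,d\vol$ and $\mathcal{H}$ is the differential entropy from the preliminaries. Assumption \ref{A4} bounds $\mathcal{H}(\nu)\le K$, giving the $K/\beta$ term. For the partition function I would lower-bound $\log Z$ by a Laplace/volume estimate around the interior global minimizer: since the optimum lies in the interior and $f$ is $M$-smooth, the sublevel set $\{f\le f^*+1/\beta\}$ has volume polynomial in $\beta^{-1}$, so $-\frac1\beta\log Z\le f^*+\frac1\beta\log(\mathrm{poly}(\beta^{-1})^{-1})+O(1/\beta)$, producing the third term. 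Adding the two lines completes the estimate.

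The main obstacle I anticipate is the discretization-error step on the manifold. Unlike the Euclidean Langevin algorithm, the LMWU noise is multiplicative (the diffusion coefficient $\sqrt{2\beta^{-1}g^{-1}}$ is state dependent) and carries the Christoffel drift $-\beta^{-1}g^{ij}\Gamma^k_{ij}$, so controlling the one-step deviation between the discrete update and the continuous diffusion requires careful estimates of how the Shahshahani metric and its derivatives vary, together with the normalization in the denominator that keeps the iterate on the simplex. Establishing the uniform second-moment bound $\sigma$ along the trajectory, via the dissipativity of Assumption \ref{A3}, is the secondary technical point that feeds the Lipschitz-in-expectation estimate.
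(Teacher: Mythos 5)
Your proposal follows essentially the same route as the paper: the identical two-term decomposition, the gradient-growth/Wasserstein bound giving the factor $(\frac{M}{2}\sigma+B)\sqrt{2/\alpha}$ (the paper's Lemma \ref{lemma:convergence1}), Talagrand to pass from $W_2$ to KL, the $e^{-\frac{3}{16}\alpha\epsilon k}+\frac{C\epsilon}{\alpha}$ KL bound (which the paper imports as Theorem 2 of \cite{GV2022} for Hessian manifolds rather than re-deriving via the interpolation argument), and the free-energy identity plus a partition-function lower bound for the Gibbs bias (the paper's Lemma \ref{lemma:convergence2}). The only cosmetic difference is that you propose a sublevel-set volume estimate for $\log Z$ where the paper explicitly integrates $e^{-cd(\vec{1},\vec{x})^2}$ in geodesic polar coordinates; both yield the same $\mathrm{poly}(\beta^{-1})$ term.
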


From the theorem we can conclude that for any given $\delta>0$, there exists $\beta>0$, $\epsilon<\frac{\delta^2\alpha}{8C(\frac{M}{2}\sigma+B)^2}$, and $k>\frac{16}{3\epsilon}\log\left(\frac{16(\frac{M}{2}\sigma+B)^2}{\delta^2\alpha}\right)$, such that $\abs{\mathbb{E}f(\vec{x}_k)-f^*}\le\delta$.

\subsection{Outline of Proof}
Suppose that the $k$'th iteration $\vec{x}_k$, which is a random variable on Shahshahani manifold $M$, has probability density function $\rho_k(\vec{x})$. Then the expectation $\mathbb{E}f(\vec{x}_k)$ can be written as 
\[
\int_Mf(\vec{x})\rho_k(\vec{x})d\vol
\]
where $d\vol$ is the Riemannian volume element induced by Shahshahani metric on $M$. Since the error $\mathbb{E}f(\vec{x}_k)-f^*$ has been decomposed into the sum of \eqref{decomp:error}, we need to bound $\abs{\mathbb{E}f(\vec{x}_k)-\mathbb{E}_{\nu}f}$ and $\abs{\mathbb{E}_{\nu}f-f^*}$ respectively. By the integral on manifold we have the following:
\begin{align}
%\begin{split}
\abs{\mathbb{E}f(\vec{x}_k)-\mathbb{E}_{\nu}f}&=\abs{\int_Mf(\vec{x})\rho_k(\vec{x})-\int_Mf(\vec{x})\nu(\vec{x})}\notag \\
&=\abs{\int_Mf(\vec{x})(\rho_k(\vec{x})-\nu(\vec{x}))d\vol}
%\end{split}
\end{align}
 In Euclidean space, the difference is bounded by the Wasserstein distance between $\rho_k$ and $\nu$ according to Lemma 6 of \cite{RRT17}, where the authors prove that $\abs{\int_{\mathbb{R}^d}gd\mu-\int_{\mathbb{R}^d}gd\nu}\le\text{const}\cdot W_2(\mu,\nu)$, if $g$, $\mu$ and $\nu$ satisfy some assumptions. Therefore, our strategy of bounding $\abs{\int_Mf\rho_kd\vol-\int_Mf\nu d\vol}$ relies on a generalized version of Lemma 6 of \cite{RRT17} in the case of Shahshahani manifold, if not for all Riemannian manifolds. Following this idea, we provide the following lemma.
 
\begin{lemma}\label{lemma:convergence1}
Let $\mu$ and $\nu$ be two density function of probability measures on Shahshahani manifold $M$. Suppose $f:M\rightarrow\mathbb{R}$ satisfies 
\[
\norm{\grad f(\vec{x})}\le \frac{M}{2}d(\vect{1},\vec{x})+B
\]
for some constants $\frac{M}{2}>0$ and $B>0$. Then
\[
\abs{\int_Mf\mu d\vol-\int_M f\nu d\vol}\le(\frac{M}{2}\sigma+B)W_2(\mu,\nu)
\]
where $\sigma^2=\int_Md(\vec{1},\vec{x})^2\mu(\vec{x})d\vol\vee\int_Md(\vec{1},\vec{y})^2\nu(\vec{y})d\vol$.
\end{lemma}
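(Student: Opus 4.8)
The plan is to transport the Euclidean argument of Lemma~6 in \cite{RRT17} to the Shahshahani manifold, replacing straight-line interpolation by geodesic interpolation and the Euclidean norm $\norm{\vec{x}}$ by the Riemannian distance $d(\vect{1},\vec{x})$. First I would fix an optimal coupling $(X,Y)$ of $\mu$ and $\nu$ for the quadratic Wasserstein cost, so that $\mathbb{E}[d(X,Y)^2]=W_2(\mu,\nu)^2$ and, since $X\sim\mu$ and $Y\sim\nu$,
\[
\int_M f\mu\,d\vol-\int_M f\nu\,d\vol=\mathbb{E}\left[f(X)-f(Y)\right].
\]

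Next, for each realization I would join $Y$ to $X$ by a minimizing constant-speed geodesic $\gamma:[0,1]\to M$ with $\gamma(0)=Y$, $\gamma(1)=X$, so that $\norm{\gamma'(t)}_{\gamma(t)}=d(X,Y)$ for all $t$. By the fundamental theorem of calculus along $\gamma$ followed by Cauchy--Schwarz on each tangent space,
\[
\abs{f(X)-f(Y)}=\abs{\int_0^1\langle\grad f(\gamma(t)),\gamma'(t)\rangle_{\gamma(t)}\,dt}\le d(X,Y)\int_0^1\norm{\grad f(\gamma(t))}_{\gamma(t)}\,dt.
\]
Inserting the growth hypothesis $\norm{\grad f}\le\frac{M}{2}d(\vect{1},\cdot)+B$ yields
\[
\abs{f(X)-f(Y)}\le d(X,Y)\left(\frac{M}{2}\int_0^1 d(\vect{1},\gamma(t))\,dt+B\right).
\]

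The crux is to control $\int_0^1 d(\vect{1},\gamma(t))\,dt$ by $\frac{1}{2}\bigl(d(\vect{1},X)+d(\vect{1},Y)\bigr)$. In Euclidean space this is immediate from convexity of the norm; here I would invoke geodesic convexity of the distance-to-barycenter function $\vec{x}\mapsto d(\vect{1},\vec{x})$, i.e. $d(\vect{1},\gamma(t))\le(1-t)d(\vect{1},Y)+t\,d(\vect{1},X)$, and integrate in $t$. I expect this to be the main obstacle, because convexity of the distance to a fixed point is automatic only in nonpositive curvature, whereas the interior of the simplex with the Shahshahani metric is (via $x_i\mapsto 2\sqrt{x_i}$) isometric to a piece of a round sphere and is therefore positively curved. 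I would justify the inequality by restricting to the geodesically convex spherical region corresponding to the open simplex---checking that geodesic balls centered at $\vect{1}$ stay below the sphere's convexity radius---or, failing that, fall back on the triangle-inequality estimate $d(\vect{1},\gamma(t))\le d(\vect{1},Y)+t\,d(X,Y)$ (and its symmetric partner) and absorb the resulting lower-order $W_2$ term.

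Finally I would take expectations over the coupling and apply Cauchy--Schwarz in $L^2(\mathbb{P})$ to split off the Wasserstein factor:
\[
\abs{\mathbb{E}[f(X)-f(Y)]}\le\sqrt{\mathbb{E}[d(X,Y)^2]}\cdot\left\Vert\frac{M}{4}\bigl(d(\vect{1},X)+d(\vect{1},Y)\bigr)+B\right\Vert_{L^2}.
\]
The first factor is exactly $W_2(\mu,\nu)$. For the second, Minkowski's inequality together with $\Vert d(\vect{1},X)\Vert_{L^2}=\sigma_\mu\le\sigma$ and $\Vert d(\vect{1},Y)\Vert_{L^2}=\sigma_\nu\le\sigma$ gives
\[
\frac{M}{4}\sigma_\mu+\frac{M}{4}\sigma_\nu+B\le\frac{M}{2}\sigma+B,
\]
which is the claimed constant and closes the argument. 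The only regularity I would need to record along the way is differentiability of $f$ and existence of minimizing geodesics between coupled points, both available on the interior of the simplex under the stated smoothness.
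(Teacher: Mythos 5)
Your proposal follows essentially the same route as the paper's proof: fix an optimal coupling $P$ of $\mu$ and $\nu$, write $f(X)-f(Y)$ as an integral of $\langle\grad f(\gamma(t)),\gamma'(t)\rangle_{\gamma(t)}$ along a minimizing constant-speed geodesic, insert the growth bound on $\norm{\grad f}$, control $d(\vect{1},\gamma(t))$ by the endpoint distances, and finish with Cauchy--Schwarz in $L^2(P)$ and Minkowski. The paper evaluates $\norm{\grad f(\gamma(t))}_{\gamma(t)}\norm{\gamma'(t)}_{\gamma(t)}$ at a maximizing parameter $\xi$ and then bounds $d(\vect{1},\gamma(\xi))\le d(\vect{1},\vec{x})+d(\vect{1},\vec{y})$ by two triangle inequalities; this is exactly your fallback route, and keeping the integral rather than taking the maximum changes nothing essential.

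Two caveats. First, your primary route via geodesic convexity of $\vec{x}\mapsto d(\vect{1},\vec{x})$ does not go through: under $x_i\mapsto 2\sqrt{x_i}$ the Shahshahani simplex is isometric to the positive-orthant portion of a round sphere of radius $2$, whose radius about $\vect{1}$ approaches the diameter of that portion as $n$ grows, so it is not contained in a convexity ball and convexity of the distance to a fixed point genuinely fails; you were right to flag this as the obstacle. Second, the fallback does not merely "absorb a lower-order term": the triangle-inequality route gives $\int_0^1 d(\vect{1},\gamma(t))\,dt\le d(\vect{1},X)+d(\vect{1},Y)$ at best (the extra $\tfrac12 d(X,Y)$ is of the same order as the endpoint distances), so after Minkowski the constant is $\tfrac{M}{2}(\sigma_\mu+\sigma_\nu)+B\le M\sigma+B$ rather than $\tfrac{M}{2}\sigma+B$. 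The paper's own proof contains the same factor-of-two slip in its last displayed line, where $c_1\sigma_\mu+c_1\sigma_\nu+c_2$ is silently replaced by $c_1\sigma+c_2$ with $\sigma$ the maximum; so your fallback proves the lemma with the constant the paper's argument actually establishes, which only rescales constants downstream and does not affect the convergence theorem.
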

Letting $\mu=\rho_k$, we can immediately obtain the expected result, i.e.,
\begin{align}
\abs{\mathbb{E}f(\vec{x}_k)-\mathbb{E}_{\nu}f}&=\abs{\int_Mf\rho_kd\vol-\int_Mf\nu d\vol}\notag\\
&\le(\frac{M}{2}\sigma+B)W_2(\rho_k,\nu)
\end{align}
Talagrand inequality is a well known connection between Wasserstein distance and KL-divergence. We say that a probability measure $\nu$ satisfies a Talagrand inequality with constant $\alpha>0$ if for all probability measure $\rho$, absolutely continuous with respect to $\nu$, with finite moments of order 2, it holds that
$
W_2(\rho,\nu)^2\le\frac{2}{\alpha}H(\rho|\nu).
$ 
Therefore, bounding $W_2(\rho_k,\nu)$ boils down to bounding the KL-divergence $H(\rho_k|\nu)$. It has been shown in \cite{GV2022} that for general Hessian Manifold, Langevin algorithm for sampling from a log-Sobolev distribution converges rapidly to a distribution with bias $\epsilon$. Since simplex with Shahshahani metric is a Hessian manifold, applying Theorem 2 of \cite{GV2022}, we can immediately conclude that there exists a constant $C$ and log-Sobolev constant $\alpha$ such that $H(\rho_k|\nu)\le e^{-\frac{3}{16}\alpha\epsilon k}+\frac{C\epsilon}{\alpha}$, and therefore
\begin{align}
%\begin{split}
\abs{\mathbb{E}f(\vec{x}_k)-\mathbb{E}_{\nu}f}&\le(\frac{M}{2}\sigma+B)\sqrt{\frac{2}{\alpha}}H(\rho_k|\nu)^{\frac{1}{2}}\notag\\
&\le(\frac{M}{2}\sigma+B)\sqrt{\frac{2}{\alpha}}\left(e^{-\frac{3}{16}\alpha\epsilon k}+\frac{C\epsilon}{\alpha}\right)^{\frac{1}{2}}.
%\end{split}
\end{align}

To see that $\phi=\sum_{i=1}^nx_i\ln x_i$ induces a Hessian metric on simplex, let $x_n=1-\sum_{i=1}^{n-1}x_i$, then 
\[
\phi=\left(1-\sum_{i=1}^{n-1}x_i\right)\ln\left(1-\sum_{i=1}^{n-1}x_i\right).
\]
The Hessian $\nabla^2\phi$ has the form of the following:

\begin{equation}\label{eq:Hess}
\left[
\begin{array}{ccc}
\frac{1}{x_1}+\frac{1}{1-\sum_{i=1}^{n-1} x_i}&\dots&\frac{1}{1-\sum_{i=1}^{n-1}x_i}
\\
\\
\vdots&\ddots&\vdots
\\
\\
\frac{1}{1-\sum_{i=1}^{n-1}x_i}&\dots&\frac{1}{x_{n-1}}+\frac{1}{1-\sum_{i=1}^{n-1}x_i}
\end{array}
\right].
\end{equation}

On the other hand, the mapping $\varphi:(x_1,...,x_{n-1})\rightarrow(x_1,...,x_{n-1},1-\sum_{i=1}^{n-1}x_i)$ from $\mathbb{R}^{n-1}$ to $\mathbb{R}^n$ induces a Riemannian metric in the projection of simplex, and this metric matrix $\langle d\varphi(\cdot),d\varphi(\cdot)\rangle$ is exactly the same as \eqref{eq:Hess}.

Running Langevin dynamics is equivalent to optimization in the space of probability densities in the underlying space \cite{Wibisono18}, and thus equivalent to sampling from the stationary distribution of the Wasserstein gradient flow asymptotically. To minimize $\int_Mf(\vec{x})\rho(\vec{x})d\vec{x}$ with respect to $\rho(\vec{x})$, we introduce the entropy regularized functional of $\rho$ defined by
$
\mathcal{L}(\rho)=\mathcal{F}(\rho)+\beta^{-1}\mathcal{H}(\rho)
$
where 
$
\mathcal{F}(\rho)=\int_Mf(\vec{x})\rho(\vec{x})d\vec{x},
$ 
and 
$
\mathcal{H}(\rho)=-\int_M\rho(\vec{x})\log\rho(\vec{x})d\vec{x}.
$
The Wasserstein space $\mathcal{P}_2(\mathcal{M})$ of probability measures on $\mathcal{M}$ is an infinite dimensional smooth Riemannian manifold. A tangent vector $R\in T_{\rho}\mathcal{M}$ is of the form $R=-\Div\left(\rho\grad \phi\right)$ for some function $\phi:\mathcal{M}\rightarrow\mathbb{R}$. The gradient of a functional $\mathcal{L}:\mathcal{P}\rightarrow\mathbb{R}$ is $\grad_{\rho}\mathcal{L}=-\Div\left(\rho\grad\frac{\delta\mathcal{L}}{\delta\rho}\right)$, where $\frac{\delta\mathcal{L}}{\delta\rho}(\vec{x})$ is the first variation of $\mathcal{L}$ with respect to $\rho$.
 It is well known that the Wasserstein gradient flow of $\mathcal{L}$ is the Fokker-Planck equation
\begin{align}\label{FP}
\frac{\partial \rho(\vec{x},t)}{\partial t}&=\Div(\rho(\vec{x},t)\grad f(\vec{x})+\beta^{-1}\grad\rho(\vec{x},t))\notag\\
&=\Div(\rho(\vec{x},t)\grad f(\vec{x}))+\beta^{-1}\Delta_M\rho(\vec{x},t),
\end{align}
where $\grad$, $\Div$ and $\Delta_M$ are gradient, divergence and Laplace-Beltrami on manifolds. The stationary solution of equation \eqref{FP} is the density proportional to $e^{-\beta f}$ that minimizes $\mathcal{L}$.
\begin{lemma}\label{lemma:convergence2}
Suppose the entropy of distribution $\nu(\vec{x})$ is uniformly bounded for all $\beta$, i.e., $h(\nu)\le K<\infty$. Then
\[
\abs{\mathbb{E}_{\nu}f-f^*}\le\frac{K}{\beta}+\frac{1}{\beta}\log\left(\text{poly}\left(\frac{1}{\beta}\right)^{-1}\right).
\]
\end{lemma}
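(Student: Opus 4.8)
The plan is to reduce the entire estimate to the log-partition function of the Gibbs measure. Write $\nu(\vec{x})=Z_\beta^{-1}e^{-\beta f(\vec{x})}$ with $Z_\beta=\int_M e^{-\beta f(\vec{x})}\,d\vol$. Taking logarithms, $\log\nu=-\beta f-\log Z_\beta$, so the differential entropy satisfies
\[
h(\nu)=-\int_M\nu\log\nu\,d\vol=\beta\,\mathbb{E}_\nu f+\log Z_\beta,
\]
which yields the exact identity $\mathbb{E}_\nu f=\beta^{-1}\bigl(h(\nu)-\log Z_\beta\bigr)$.

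Next I would factor out the minimum value. Setting $\widetilde Z_\beta:=\int_M e^{-\beta(f(\vec{x})-f^*)}\,d\vol=e^{\beta f^*}Z_\beta$, the identity becomes
\[
\mathbb{E}_\nu f-f^*=\frac{1}{\beta}\bigl(h(\nu)-\log\widetilde Z_\beta\bigr).
\]
Since $f(\vec{x})\ge f^*$ on all of $M$, we have $\mathbb{E}_\nu f\ge f^*$, so the left-hand side is nonnegative and it suffices to produce an upper bound; the absolute value then comes for free. The entropy term is immediately controlled by Assumption~\ref{A4}, $h(\nu)\le K$, contributing $K/\beta$.

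The remaining task is to bound $-\beta^{-1}\log\widetilde Z_\beta$ from above, i.e.\ to bound $\widetilde Z_\beta$ from below, and I would sandwich it between two polynomials in $\beta^{-1}$. The trivial upper bound $\widetilde Z_\beta\le\vol(M)$ holds because the integrand is at most $1$. For the lower bound I invoke a Laplace-type estimate. Because the global minimizer $\vec{x}^*$ is interior, $\grad f(\vec{x}^*)=0$, and $M$-smoothness (Assumption~\ref{A2}) integrated along the constant-speed geodesic from $\vec{x}^*$ gives the quadratic growth bound $f(\vec{x})-f^*\le\tfrac{M}{2}\,d(\vec{x}^*,\vec{x})^2$. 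Restricting the integral to a small geodesic ball $B_r(\vec{x}^*)$ on which the Shahshahani volume density is comparable to Lebesgue measure yields
\[
\widetilde Z_\beta\ge\int_{B_r(\vec{x}^*)}e^{-\frac{\beta M}{2}d(\vec{x}^*,\vec{x})^2}\,d\vol\ \gtrsim\ \beta^{-\dim M/2},
\]
a Gaussian integral of order $\beta^{-\dim M/2}$. Hence $\widetilde Z_\beta=\mathrm{poly}(\beta^{-1})$, so $-\beta^{-1}\log\widetilde Z_\beta\le\beta^{-1}\log\bigl(\mathrm{poly}(\beta^{-1})^{-1}\bigr)$, and combining with the entropy term proves the claim.

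The main obstacle is the Laplace lower bound on the curved simplex rather than in flat space: one must verify that the Riemannian volume element does not degenerate near $\vec{x}^*$, so that the flat Gaussian estimate survives the change of coordinates. This is precisely where the interior-minimizer hypothesis enters—the interior of $\Delta^{n-1}$ is diffeomorphic to $\mathbb{R}^{\dim M}$ with a smooth, positive-definite Shahshahani metric, so in local coordinates the volume density is smooth and bounded away from zero on $B_r(\vec{x}^*)$ and the estimate transfers. Were the minimizer on the boundary, the geodesic ball would exit the manifold and this polynomial rate could fail.
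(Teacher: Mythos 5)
Your proof is correct and follows essentially the same route as the paper: the entropy identity $\mathbb{E}_\nu f=\beta^{-1}\left(h(\nu)-\log Z_\beta\right)$, the quadratic bound $f-f^*\le\frac{M}{2}d(\vec{x}^*,\cdot)^2$ from smoothness at the interior minimizer, and a Gaussian lower bound on the (shifted) partition function giving the $\mathrm{poly}(\beta^{-1})$ term. The only divergence is technical rather than structural --- the paper evaluates $\int_M e^{-cd(\vec{1},\vec{x})^2}d\vol$ explicitly via a geodesic-sphere volume expansion, whereas you use a localized Laplace estimate on a geodesic ball --- and your observation that $\mathbb{E}_\nu f\ge f^*$ disposes of the absolute value more cleanly than the paper, which leaves that point implicit.
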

Let $p(\vec{x})=\frac{e^{-\beta f(\vec{x})}}{\Lambda}$ denote the density of the Gibbs measure with respect to the measure induced by the Shahshahani metric in simplex, where
$
\Lambda:=\int_Me^{-\beta f(\vec{x})}d\vec{x}
$
is the normalization constant known as the partition function. Note that the differential entropy of $p$ has the following expression,
\[
h(p)=\frac{1}{\Lambda}\int_M\beta f(\vec{x})e^{-\beta f(\vec{x})}d\vol+\log\Lambda
\]
thus we have that
\[
\int_Mf(\vec{x})p(\vec{x})d\vol=\frac{1}{\beta}(h(p)-\log\Lambda).
\]
Let $\vec{x}^*$ be any point that minimizes $f(\vec{x})$. Then $\grad f(\vec{x}^*)=0$. Since $f$ is assumed to be geodesically smooth, we have $f(\vec{x})-f(\vec{x}^*)\le\frac{M}{2}d(\vec{x},\vec{x}^*)^2$, the lower bound of $\log\Lambda$ can be obtained by following calculation,
\begin{align}
\log \Lambda&=\log\int_Me^{-\beta f(\vec{x})}d\vol\notag\\
&=-\beta f(\vec{x}^*)+\log\int_Me^{-\beta(f(\vec{x}^*)-f(\vec{x}))}d\vol
\notag\\
&\ge -\beta f(\vec{x}^*)+\log\int_Me^{-\beta d(\vec{x},\vec{x}^*)^2/2}d\vol
\end{align}
Without loss of generality, we can assume that the global minima $\vec{x}^*$ is at the center of simplex, i.e., $\vec{x}^*=\vec{1}=\left(\frac{1}{n},...,\frac{1}{n}\right)$. In appendix, we show that the integral $\int_Me^{-cd(\vec{1},\vec{x})^2}d\vol$ is bounded. By letting $c=\beta\frac{M}{2}$, we furthermore end up with a concrete expression of $\int_Me^{-cd(\vec{1},\vec{x})^2}d\vol$ in terms of a polynomial of $\beta^{-1}$, which is denoted briefly as follows,
\[
\log\Lambda\ge -\beta f(\vec{x}^*)+\log\left(\text{poly}\left(\beta^{-1}\right)\right),
\]
and then we have
\[
-f(\vec{x}^*)\le\frac{\log\Lambda}{\beta}+\frac{1}{\beta}\log\left(\text{poly}\left(\beta^{-1}\right)^{-1}\right).
\]
Combining with $\mathbb{E}_{\nu}f=\frac{h(\nu)}{\beta}-\frac{\log\Lambda}{\beta}$, we have the following bound:
\[
\mathbb{E}_{\nu}f-f(\vec{x}^*)\le\frac{K}{\beta}+\frac{1}{\beta}\log\left(\text{poly}\left(\beta^{-1}\right)^{-1}\right).
\]

\section{Application in Portfolio Management}

%\begin{table*}[htb] 
%\centering 
%\caption{Out-of-sample Evaluation Results for Polynomial Portfolio %Optimization} \label{result} 
%\begin{threeparttable} 
%\begin{tabular}{ccccccc} 
%\hline   
%Method & Degenerate & Increasing & MV & MVS & MVSK & Equal\\
%\hline
%\\
%MWU &  74.7203 &  17.4527 &   0.8561 &   0.8391 &  16.1104 &  44.5159  \\
%\\
%LMWU&  70.9927 &  16.4264 &   0.8135 &   0.7803 &  15.4364 &  42.0056  \\ 
%\hline   
%\end{tabular} 
%\begin{tablenotes} 
%\scriptsize 
%\item  
%\end{tablenotes} 
%\end{threeparttable} 
%\end{table*}

\begin{table*}[htb] 
\centering 
\begin{threeparttable} 
\begin{tabular}{lccccccc} 
\hline 
Method & Degenerate & Increasing & MV & MVS & MVSK & Equal\\
\hline
MWU   &  74.7203 &  17.4527 &   0.8561 &   0.8391 &  16.1104 &  44.5159  \\ 
AMWU  \cite{FPW2022}&  76.3657 &  17.5554 &   0.8561 &   0.8579 &  15.6559 &  43.4190  \\ 
Projected Langevin  \cite{Lamperski} &  73.9596 &  17.8720 &   0.8674 &   0.8846 &  16.7519 &  43.2847  \\ 
LMWU (this work)&  70.8930 &  16.7684 &   0.8464 &   0.8314 &  14.9585 &  42.8307  \\ 
\hline 
\end{tabular} 
%\begin{tablenotes} 
%\scriptsize 
%\item 
%\end{tablenotes} 
\end{threeparttable} 
\caption{Out-of-sample Evaluation Results for Polynomial Portfolio Optimization} \label{result} 
\end{table*} 

 Portfolio management is a critical aspect of finance as it facilitates the efficient and effective management of investments to achieve specific financial goals and objectives. It involves the careful selection, diversification, and alignment of various financial instruments such as stocks, bonds, and other assets, to balance risk and returns according to an individual or institution's risk tolerance, time horizon, and investment objectives. The strategic allocation of assets in a portfolio can enhance returns, mitigate potential losses, and provide a smoother investment journey. Moreover, portfolio management offers a structured approach to monitor, review, and adjust investments in response to changing market conditions, personal circumstances, or shifts in financial goals, making it an indispensable tool for successful financial planning and wealth management.
 
The polynomial portfolio optimization problem can be formally represented as
\[
\hat{\vec{ w}} = \underset{\vec{ w}\in\mathcal {W}}{\argmin} \, \mathbb{E}[f(\vec{ w}, \vec{ r})], \label{fun}
\]
where $\mathbb{E}$ denotes the expectation operator, $f(\vec{w}, \vec{r})$ refers to a polynomial loss function, and $\vec{r} = [r_1, r_2, ..., r_n]^\top$ symbolizes the vector of $n$ individual returns within the portfolio. Additionally, $\vec{w} = [w_1, w_2, ..., w_n]^\top$ signifies the weights designated to each constituent element of the portfolio. It's important to note that $\vec{w}$ is restricted to the feasible set $\mathcal{W}$,
$
\mathcal{W} \equiv \{\vec{ w}\in\mathbb {R}^N: \sum^N_{i=1} w_i = 1 \},$ which constrains the summation of the weights to be one. This constraint implies that no leveraging or borrowing is permitted in the portfolio construction.

We propose a specific formulation for the loss function $f(\vec{w}, \vec{r})$ as follows:
\begin{align}
f(\vec{ w}, \vec{ r}) = -\lambda_1m_1(\vec{ w}, \vec{ r})& + \lambda_2m_2(\vec{ w}, \vec{ r}) + ... \notag\\
&+ (-1)^{d} \lambda_dm_d(\vec{ w}, \vec{ r}), \label{lossfun}
\end{align}
where $m_1(\vec{w}, \vec{r}) = \vec{w}^{\top} \vec{r}$ represents the sample portfolio return, and
\[
m_i(\vec{ w}, \vec{ r}) = \Big(m_1(\vec{ w}, \vec{ r}) - \mathbb{E}\big( m_1(\vec{ w}, \vec{ r})\big)\Big)^i, \quad i = 2,...,d
\]
encapsulates the $i^{th}$ central moment of $m_1(\vec{w}, \vec{r})$, with $\mathbb{E}\big(m_i(\vec{w}, \vec{r})\big)$ being the expected value. The parameter vector $\vec{\lambda} = [\lambda_1,...,\lambda_d]^\top$ contains the risk preference parameters, each satisfying $\lambda_i\ge0$, and their summation amounts to one, i.e., $\sum^d_{i=1}\lambda_i = 1$. It's worth noting that the mean-variance (MV), mean-variance-skewness (MVS), and mean-variance-skewness-kurtosis (MVSK) losses can be considered specific instances of this general polynomial portfolio optimization framework.

Our dataset comprises daily entries for $n = 10$ notable NASDAQ stocks, covering the period from January 3, 2011, to December 31, 2021, and thereby accumulating $T = 2517$ periods. We initiate a rolling-window out-of-sample forecasting exercise from the beginning of this data sample. The window length is set at $L=1000$, approximately corresponding to four years of training data. To calculate the optimal portfolio weights, $\hat{\vec{w}}$, we implement four estimation strategies: the traditional Multiplicative Weight Update (MWU) approach, the accelerated MWU algorithm purposed in \cite{FPW2022}, the projected langevin gradient descent algorithm purposed in \cite{Lamperski}
and our newly proposed Langevin Multiplicative Weights Update (LMWU) method.

Following this, we apply the estimated portfolio weights to the returns in the succeeding period and assess the performance of the constructed portfolio using the loss function defined in Equation \eqref{lossfun}. It's crucial to note that our loss function relies on a predetermined parameter, $\vect{\lambda}$, which represents different risk preferences. We take into account the following potential values for $\vect{\lambda}$:
\begin{enumerate}%[(i)]
\item Increasing preference: $\frac{1}{15}, \frac{2}{15},...,\frac{5}{15}$
\item Degenerate preference: $\frac{5}{15}, \frac{4}{15},...,\frac{1}{15}$
\item Mean-Variance (MV) preference: $\frac{1}{2},\frac{1}{2},0,0,0$
\item Mean-Variance-Skewness (MVS) preference: $\frac{1}{3},\frac{1}{3},\frac{1}{3},0,0$
\item Mean-Variance-Skewness-Kurtosis (MVSK) preference: $\frac{1}{4},\frac{1}{4},\frac{1}{4},\frac{1}{4},0$
\item Equal preference: $\frac{1}{5},\frac{1}{5},...,\frac{1}{5}$
\end{enumerate}

For each period $t$, we record the loss score, denoted as $\widehat{\textrm{Loss}}_{t}$, and compute the average loss score using the formula:
$
\widehat{\textrm{Score}} = \frac{1}{T-L}\sum^T_{t = T-L+1}\widehat{\textrm{Loss}}_{t}.
$
The outcomes are summarized in Table \ref{result}. The table's first column delineates the methods employed in the exercise, while columns two through seven display the results of these methods under various risk preferences, as indicated in the header row.

The data clearly demonstrates that the LMWU method outperforms the MWU method and several of its other variants across all risk preferences. For example, under the Degenerate preference, the LMWU method registers a score of 70.8930, a better result (considering the goal is minimization) than the MWU's score of 74.7203. This superiority is consistent across other risk preferences as well. Specifically, for Mean-Variance (MV) and Mean-Variance-Skewness (MVS) preferences, which are likely more commonplace in portfolio management, the LMWU method achieves superior scores (0.8464 and 0.8314, respectively) compared to the MWU method (0.8561 and 0.8391, respectively). Similar better performance can also be observed with Langevin MWU compared to other variants of MWU algorithms. These observations are consistent with our theoretical analysis of LMWU: it has the ability to escape local minima and converge towards global minima.

In summary, these results underscore the efficacy of our proposed LMWU method in the realm of polynomial portfolio optimization.

\section{Additional Experiments}

In this section, we present experiments comparing Langevin-MWU  with algorithms presented in Table \ref{tab:comparison}. We use Langevin-MWU and other algorithms for comparison to optimize several non-convex functions with many local minima. The experimental results show Langevin-MWU escapes such bad local minima and finds minima with smaller function values, while other algorithms either get stuck at local minima or are more unstable than Langevin-MWU. The experimental results are presented in Figure \ref{E3} and  Figure \ref{E4}. Future experiments, especially the examples demonstrating how the trajectories of Langevin-MWU avoid local minima and converge to global minima, are presented in the Appendix.

\paragraph{Test functions.}
We construct non-convex functions to verify the efficiency of LMWU in finding global minima. The functions are given as follows:
\begin{align*}
f_1 (x,y,z) &= -\ln \left( e^{ -10 (x - 0.3)^2 - 20(y - 0.5)^2 - 30(z - 0.2)^2 } \right. \\
    &\quad + e^{ -30 (x - 0.4)^2 - 20(y - 0.2)^2 - 36(z - 0.4)^2 } \bigg) \\
    &\quad + y + 10.
\end{align*}
and
\begin{align*}
f_2 (x,y,z) &= -(x - 0.6)^2 (x - 0.2)^2 + (y - 0.3)(y - 0.4)^3 \\
    &\quad + (z - 0.2)^3(z - 0.8) \\
    &\quad - xy - 0.4z.
\end{align*}

\begin{figure}[htbp]
    \centering
    \begin{subfigure}[b]{0.49\linewidth}
        \centering
        \includegraphics[width=\linewidth]{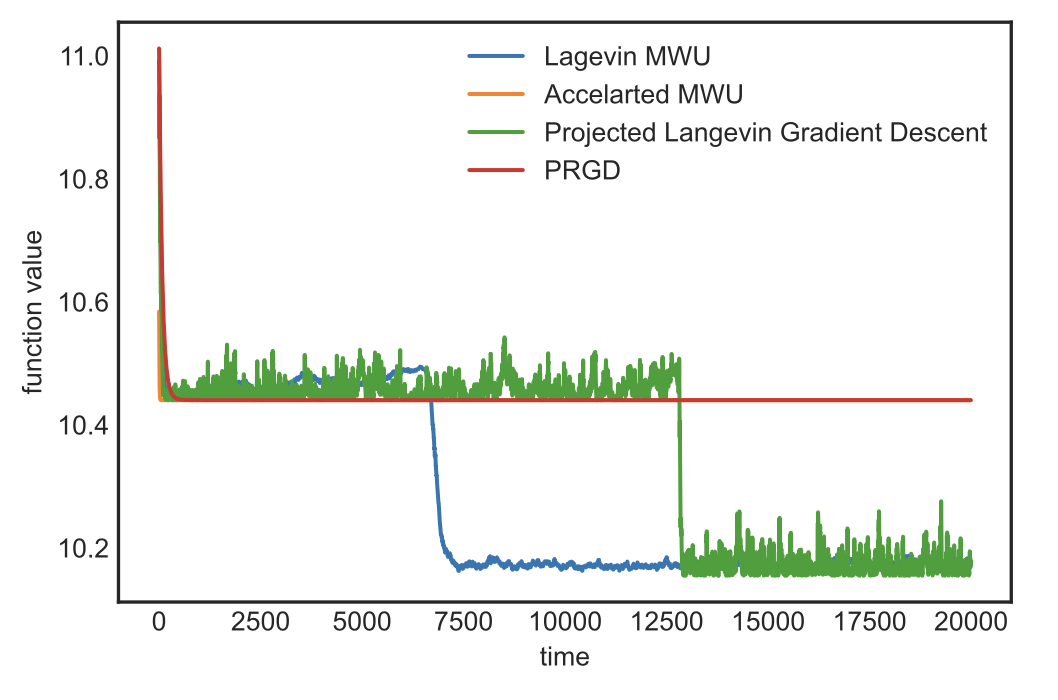}
        \caption{Test function : $f_1$}
        \label{E11}
    \end{subfigure}
    \hfill
    \begin{subfigure}[b]{0.49\linewidth}
        \centering
        \includegraphics[width=\linewidth]{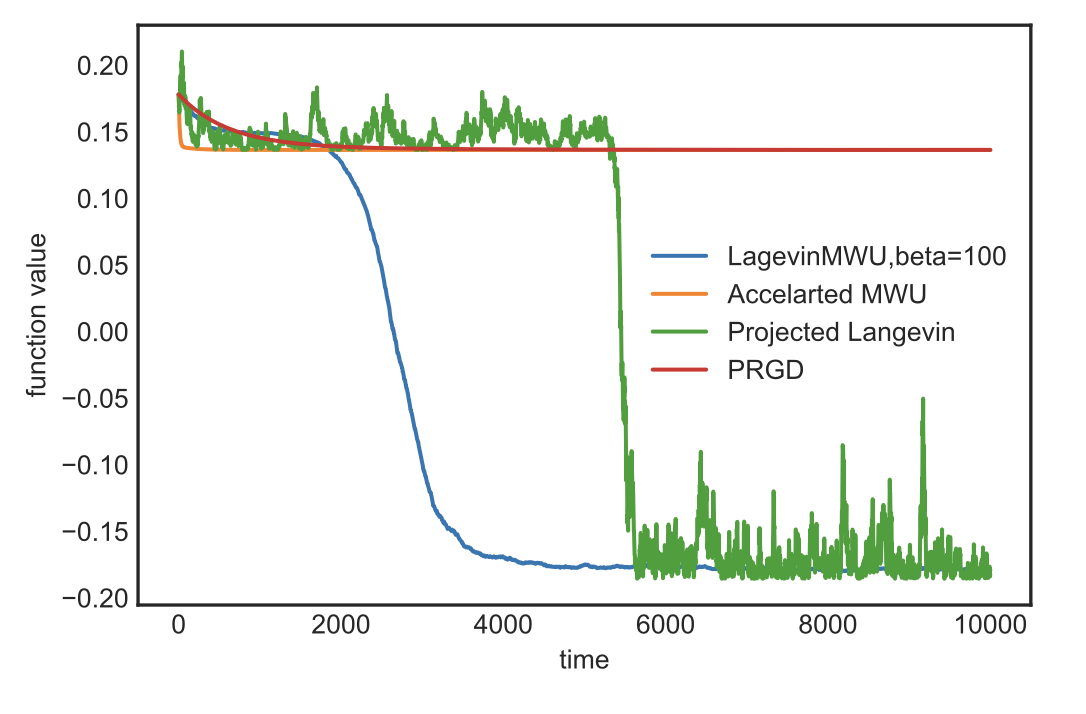}
        \caption{Test function : $f_2$}
        \label{E21}
    \end{subfigure}
    \caption{Comparison of LMWU with Accelerated MWU  \cite{FPW2022}, Projected Lagevin \cite{Lamperski}, and PRGD \cite{CB19}.}
    \label{E3}
\end{figure}

\begin{figure}[htbp]
    \centering
    \begin{subfigure}[b]{0.49\linewidth}
        \centering
        \includegraphics[width=\linewidth]{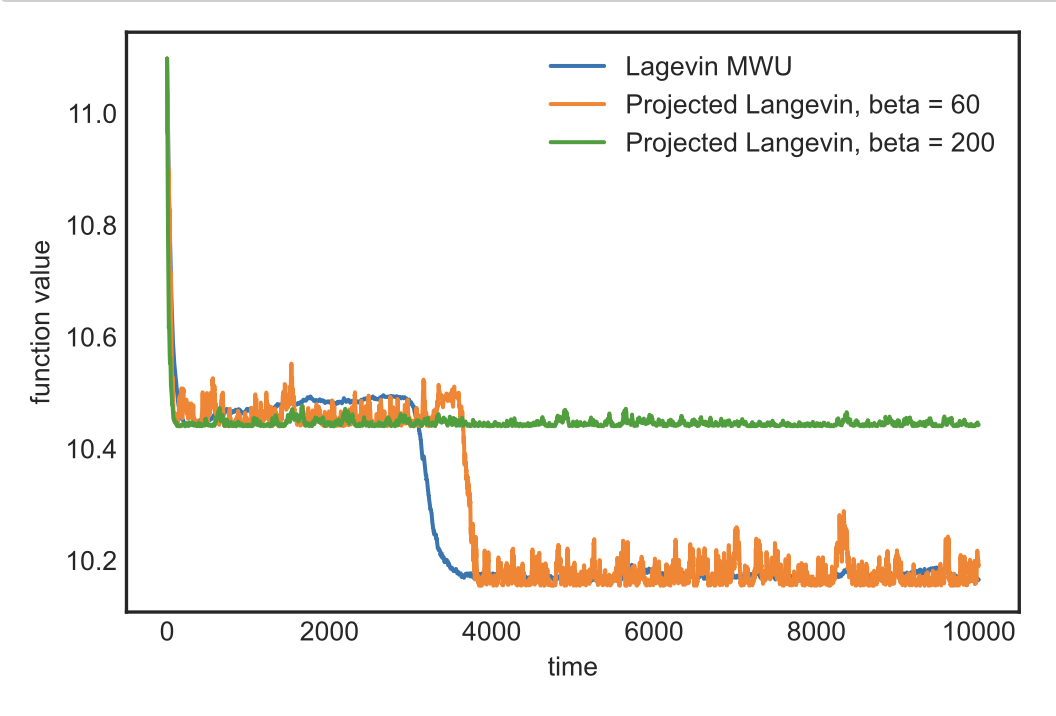}
        \caption{Test function : $f_1$}
        \label{E31}
    \end{subfigure}
    \hfill
    \begin{subfigure}[b]{0.49\linewidth}
        \centering
        \includegraphics[width=\linewidth]{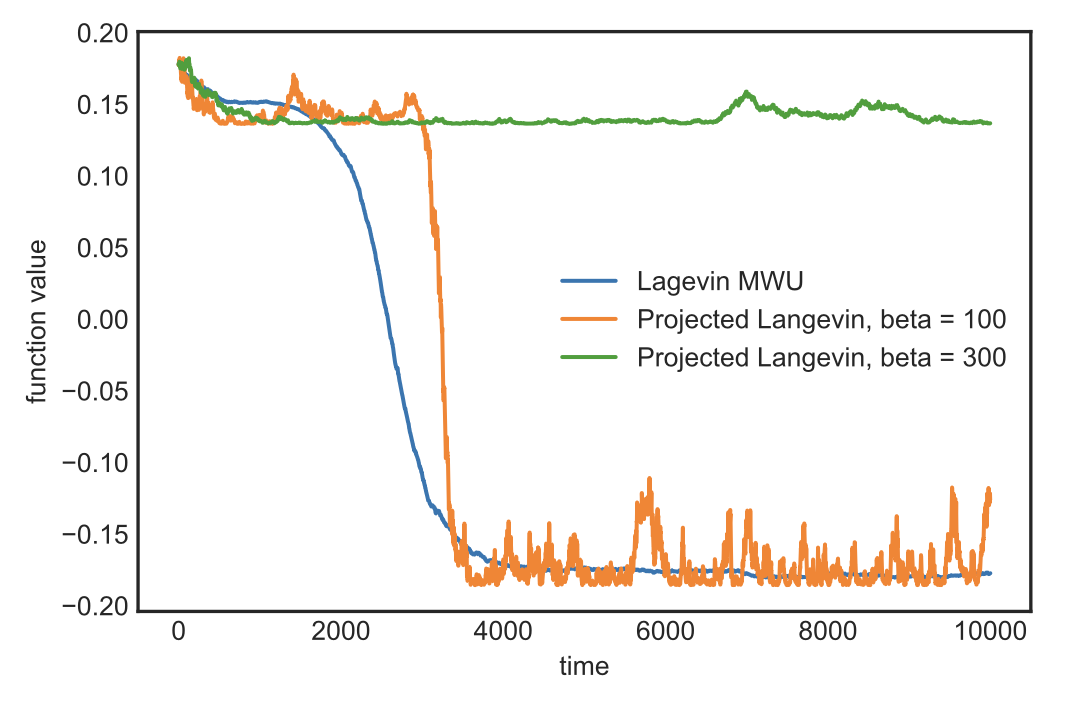}
        \caption{Test function : $f_2$}
        \label{E32}
    \end{subfigure}
    \caption{Future comparison of LMWU with Projected Langevin \cite{Lamperski}.}
    \label{E4}
\end{figure}

As shown in Figure \ref{E3} and \ref{E4}, LMWU and Projected Langevin can converge to global optima, but Perturbed RGD and Accelerated MWU only converge to local optima, which agree with the claims of \cite{FPW2022} and \cite{CB19}.
%    \item Despite global convergence of Projected Langevin, LMWU is more stable with same step-size. As shown in Figure \ref{E4}, when Projected Langevin (\textbf{\textcolor{orange}{—}}) approximately reaches the global minima, the amplitude of oscillation in the value of loss functions is larger than that of LMWU. To smooth the curves of function values, the randomness in Projected Langevin need to be small. However, this will lead to a failure in avoiding local minima as shown by the (\textbf{\textcolor{green}{—}}) line.
%\end{itemize}

%The curves of convergence in function values show that Langevin-MWU outperforms MWU when $\beta$ is small enough. This can be seen clearly from the trajectories of the algorithms on contour map : MWU is  attracted by a local minimum near initial points, but although starting from the same initial point, LMWU will escape the bad local minimum near initial points and go to the global minimum. The choice of $\beta$ have a great influence on behaviors of LMWU : as the experimental results show that larger $\beta$ will make LMWU find a better convergence point, but with a slower convergence rate, this is compatible with our theoretical analysis. Future discussions can be found in supplementary materials.

\section{Conclusion} In this paper we focus on a constrained non-convex optimization problem that widely exists in multi-agent learning. We propose a novel algorithm called Langevin Multiplicative Weights Update (LMWU) which is a stochastic version of classic MWU algorithm. Our theoretical analysis shows that LMWU converges to interior global optima of the objective function. Another important setting that is missing in current work is the time-varying environment, e.g., \cite{FFHLPW} in min-max optimization. We leave the time-varying portfolio management for future investigation. %Moreover, the comparison of MWU and LMWU on real dataset in polynomial portfolio management clearly shows the power of LMWU in applications.
%Furthermore, a non-asymptotic convergence analysis is provided from the Riemannian geometric perspective. There are promising directions for future research, such as the case when the global optima lies on the boundary of the constraints and the non-asymptotic convergence rate for the product manifolds. 
\section*{Acknowledgements}Xiao Wang acknowledges Grant 202110458 from Shanghai University of Finance and Economics and support from the Shanghai Research Center for Data Science and Decision Technology.
Xie's research is supported by the Natural Science Foundation of China (72173075) and the Shanghai Research Center for Data Science and Decision Technology.

\bibliography{aaai25}

\appendix
\onecolumn

\section{Multi-Agent L-MWU}
It is immediate to have the multi-agent version of Langevin MWU based on the algorithm given in main article.
\begin{algorithm}
\caption{Langevin-MWU (multi-agent) }
\label{alg:C2}
\begin{algorithmic}
\STATE Input: error bound $\delta>0$, $\beta>0$, step size $\epsilon>0$ 
\\
Initialize: $\vec{x}_i(0)\sim\rho_i(0)$ for all $i\in[N]$
\REPEAT
\STATE For $i=1,...,N$, 
\\
Compute 
$S_{\vec{x}_i}=\sum_{s=1}^{n_i}\frac{1}{x_{is}}$, 
\\
Sample $z_0^{is}\sim\mathcal{N}(0,1)$.
\\
Compute \[V_0^{is}=\frac{\epsilon}{2\beta}\left(n_i+1-(1+x_{is})S_{\vec{x}_i}\right)+\sqrt{2\epsilon\beta^{-1} x_{is}}z_0^{is}\]
\\
Set \[x_{is}\leftarrow\frac{x_{is}-\epsilon x_{is}\frac{\partial f}{\partial x_{is}}+V_0^{is}}{1-\epsilon\sum_{s=1}^{n_i}x_{is}\frac{\partial f}{\partial x_{is}}+\sum_{s=1}^{n_i}V_0^{is}}\]
\UNTIL{$k$ large enough}
\end{algorithmic}
\end{algorithm}

\section{Empirical Illustration on Polynomial Portfolio Management}
Supplementing to experiment of LMWU on polynomial portfolio management, we conduct empirical illustration of the non-convexity problem in polynomial portfolio optimization. We consider three representative stocks ($N=3$): (i) AAPL: Apple Inc.; (ii) AMT: American Tower Corp.; and (iii) COST: Costco Wholesale Corp., which are typical companies from the IT sector, the real estate sector, and the consumer discretionary sector, respectively. We collect the daily return data for the three stocks in year 2012 as an example. We consider a high order polynomial function of $d = 5$ in this illustration. We can even broaden out constraints on the weights by setting $w^L = -1$, $w^U = 2$, and $\lambda_i = 1/d$ for all $i$. 

We first consider a simple bivariate portfolio \{AAPL, AMT\}. We consider all the possible weights combination in the feasible set $\mathcal{W}$. Figure \ref{f1} plots the estimated $ -\hat{\mathbb{E}}[f(\vec{ w}, \vec{r})]$ against $w_1$.\footnote{Note that once $w_1$ is known, we immediately know the weight for the second stock due to $\sum^N_{i=1} w_i = 1$. } It is obvious that the figure consists of one global maximum along with a local maximum. The optimization problem to find the best weights is clearly non-convex. 

We include all three stocks \{AAPL, AMT, COST\} in the second portfolio. Similarly, we consider all the possible weights combination in the feasible set $\mathcal{W}$. The 3D Figure \ref{f2} plots the estimated $ -\hat{\mathbb{E}}[f(\vec{ w}, \vec{ r})]$ against $w_1$ and $w_2$. To have better visualization, we expand the interval of $w_1$ and $w_2$, although our estimation straightly follow the condition $\vec{ w}\in\mathcal{W}$. Again, the polynomial portfolio optimization problem is clearly non-convex. In fact, three maximums (one global and one local) appear in the graph. Although impossible to illustrate, we should expect more local maximums in such non-convexity as we include more stocks in the portfolio. 
Conventional convex optimization algorithm will clearly fail to deliver the global optima in the above illustration. This emphasizes the importance of our proposed algorithm that is capable of solving the non-convexity problem in high-order polynomial portfolio optimization.

\begin{figure}[H]
\centering
\includegraphics[scale = 0.6]{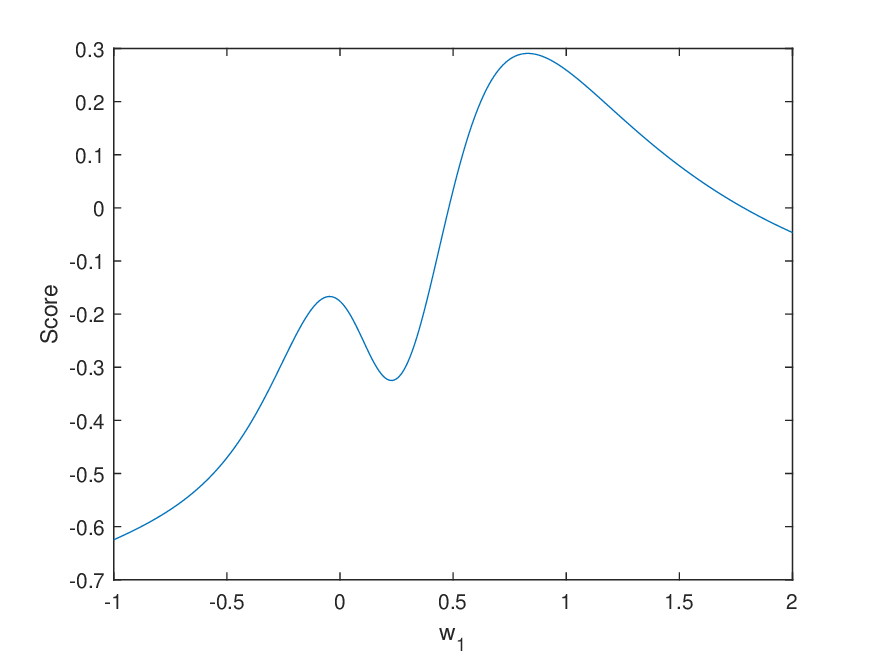}
\caption{An Illustration of Non-convexity in Polynomial Portfolio Optimization}\label{f1}
\end{figure}

\begin{figure}[H]
\centering
\includegraphics[scale = 0.6]{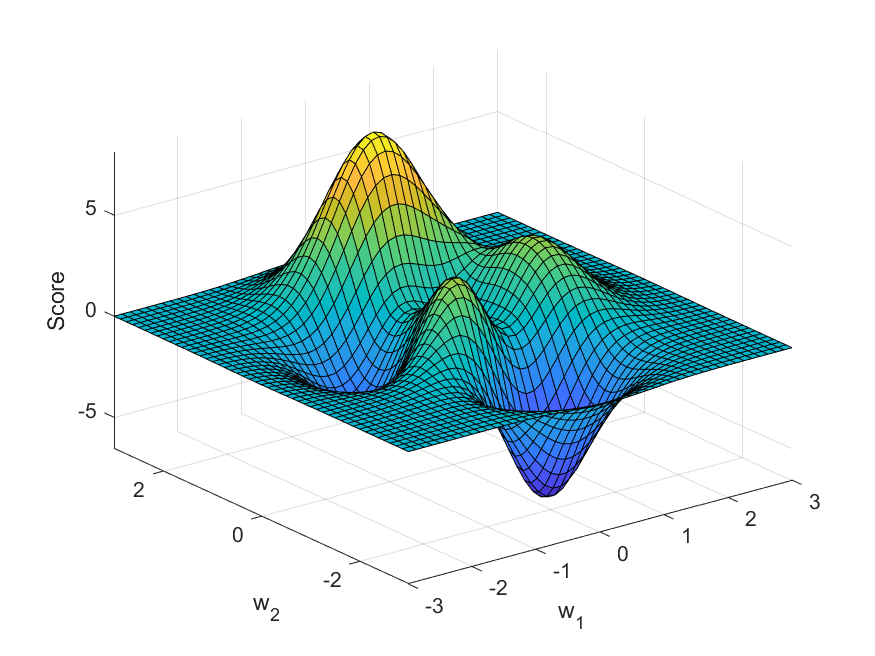}
\caption{A 3D Illustration of Non-convexity in Polynomial Portfolio Optimization}\label{f2}
\end{figure}

\section{More Experiments}

In this section, we present experiments comparing Langevin-MWU  with classic MWU. We use Langevin-MWU and MWU to optimize several non-convex functions with many local minima. The experimental results show that MWU converges to bad local minima, but Langevin-MWU escapes such bad local minima and finds minima with smaller function values.

\paragraph{Test functions.}
We construct 6 non-convex functions to verify the efficiency of LMWU in finding global minima. The functions are given as follows,
\begin{itemize}
\item[(1)]In Figure \eqref{f_1}, 
\begin{align}
    f_1(x,y,z) &= -\ln (e^{ -10 (x - 0.3)^2 - 20(y - 0.5)^2 - 30(z - 0.2)^2 }\notag \\
    &+e^{ -30 (x - 0.4)^2 - 20(y - 0.2)^2 - 36(z - 0.4)^2 } )\notag\\ &+ y + 10.
\end{align}
\item[(2)]In Figure \eqref{f_2} 
\begin{align}
f_2(x,y,z) &= -\ln (e^{ -15 (x - 0.4)^2 - 60(y - 0.4)^2 - 10(z - 0.2)^2 }\notag\\
& +e^{ -3 (x - 0.4)^2 - 2(y - 0.2)^2 - 6(z - 0.4)^2 } ) + y.
\end{align}

\item[(3)]In Figure \eqref{f_3} 
\begin{align}
f_3(x,y,z) &= (x - 0.3)^2 (x - 0.9)^2 \notag\\ 
&+ (y - 0.2)^2(y - 0.7)^2\notag\\ 
&+ (z - 0.6)^2(z - 0.1)^2 + (x - 0.3)(y - 0.5)   .
\end{align}

\item[(4)] in Figure \eqref{f_4} 
\begin{align}
f_4(x,y,z) &= -(x - 0.6)^2 (x - 0.2)^2 \notag\\
&+ (y - 0.3)(y - 0.4)^3\notag\\ 
&+ (z - 0.2)^3(z - 0.8) - xy - 0.4z  .
\end{align}

\item[(5)]In (a) of Figure \eqref{f_56} 
\begin{align}
f_5(x,y,z,w,v) &= (x - 0.6)^2 (x - 0.2)^2 - xy  \notag\\
&+ (y - 0.3)^2(y - 0.4)^2 + (z - 0.2)^4
\notag\\
&- 0.5zw + (w - 0.5)^4 + (v - 0.3)^4   .
\end{align}

\item[(6)]In (b) of Figure \eqref{f_56} 
\begin{align}
f_6(x,y,z,w,v,h) &= (x - 0.6)^2 (x - 0.8)  \notag\\
&+ (y - 0.9)(y - 0.4)^2 \notag\\
&+ (z - 0.2)^2 + (v - 0.6)^2 
\notag\\
&+ (w - 0.5)^2 - 0.5vw + (h - 0.5)^2  .
\end{align}
\end{itemize}

For test functions (1)-(4), we present both trajectories of algorithms on the contour maps of text functions and the curve of convergence in the function values. With simplex constrains $x + y + z = 1, x,y,z \ge 0$, the values of a three variables function $f(x,y,z) $ constrained on a simplex are determined variables $x$ and $y$, thus we can draw trajectory and level curves of $f(x,y,z)$ on a $(x,y)$-plane. Note that since $x + y \le 1$, only the lower half part of the $(x,y)$-plane is meaningful, and algorithms' trajectories will only appear on lower half part of $(x,y)$-plane. For test functions (5) and (6) with more than three variables, we only show their curves of convergence in function values.
\paragraph{Parameter setting.} 
Since the behaviors of Langevin-MWU are controlled by the parameter $\beta$, in the experiments we choose different $\beta$ to show the power of using larger $\beta$'s, the choices of $\beta$ are denoted on the convergence curves graph. In experiments we set the parameters as follows:
\begin{itemize}
\item[(1)] Figure \eqref{f_1}: Initial point $(0.3,0.6,0.1)$, MWU's step size $10^{-3}$, LMWU's step size $10^{-4}$ , $\beta  =10,50,100 $.
\item[(2)] Figure \eqref{f_2}: Initial point $(0.4,0.1,0.5)$, MWU's step size $10^{-3}$, LMWU's step size $5 \times 10^{-5}$ , $\beta  =10,50,100 $.
\item[(3)] Figure \eqref{f_3}: Initial point $(0.2,0.75,0.05)$, MWU's step size $10^{-2}$, LMWU's step size $10^{-3}$ , $\beta  =10,2000,5000 $.
\item[(4)] Figure \eqref{f_4}: Initial point $(0.5,0.4,0.1)$, MWU's step size $10^{-2}$, LMWU's step size $2 \times 10^{-4}$ , $\beta  =1000,2000,8000 $.
\item[(5)] (a) of Figure \eqref{f_56} :  Initial point $(0.1,0.05,0.4,0.4,0.05)$, MWU's step size $5 \times 10^{-2}$, LMWU's step size $5 \times 10^{-3}$ , $\beta  =800,2000,3000 $.
\item[(6)] (b) of Figure \eqref{f_56} :  Initial point $(0.4,0.1,0.1,0.2,0.1,0.1)$, MWU's step size $10^{-4}$, LMWU's step size $10^{-4}$ , $\beta=300,3000,8000 $.
\end{itemize}
In each experiments MWU and LMWU starting from the same initial points and run the same number of steps.

\paragraph{Analysis of experimental results.} The curves of convergence in function values show that Langevin-MWU outperforms MWU when $\beta$ is small enough. This can be seen clearly from the trajectories of the algorithms on contour map : MWU is  attracted by a local minimum near initial points, but although starting from the same initial point, LMWU will escape the bad local minimum near initial points and go to the global minimum. The choice of $\beta$ have a great influence on behaviors of LMWU : as the experimental results show that larger $\beta$ will make LMWU find a better convergence point, but with a slower convergence rate, this is compatible with our theoretical analysis. Moreover, as shown in Figure \eqref{f_3} and (a) of Figure \eqref{f_56}, an inappropriate choice of $\beta$ will make LMWU underperform MWU. In fact, for different test functions, the range of suitable $\beta$ is very different, thus choosing an optimal $\beta$ is an important question for future research.

\begin{figure}[H]
\centering
\begin{subfigure}[b]{0.3\textwidth}
    \centering
    \includegraphics[width=\textwidth]{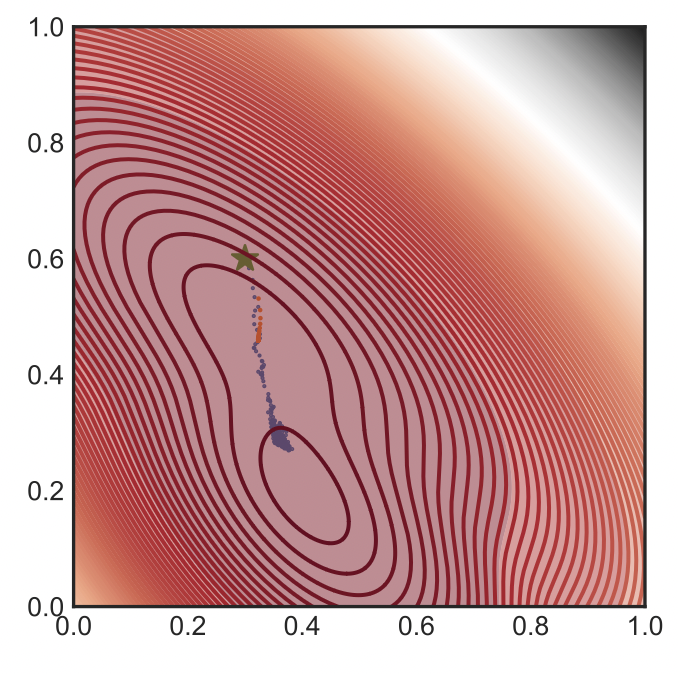}
    \caption{Trajectories for $f_1$}
    \label{f1_trajectories}
\end{subfigure}
\begin{subfigure}[b]{0.3\textwidth}
    \centering
    \includegraphics[width=\textwidth]{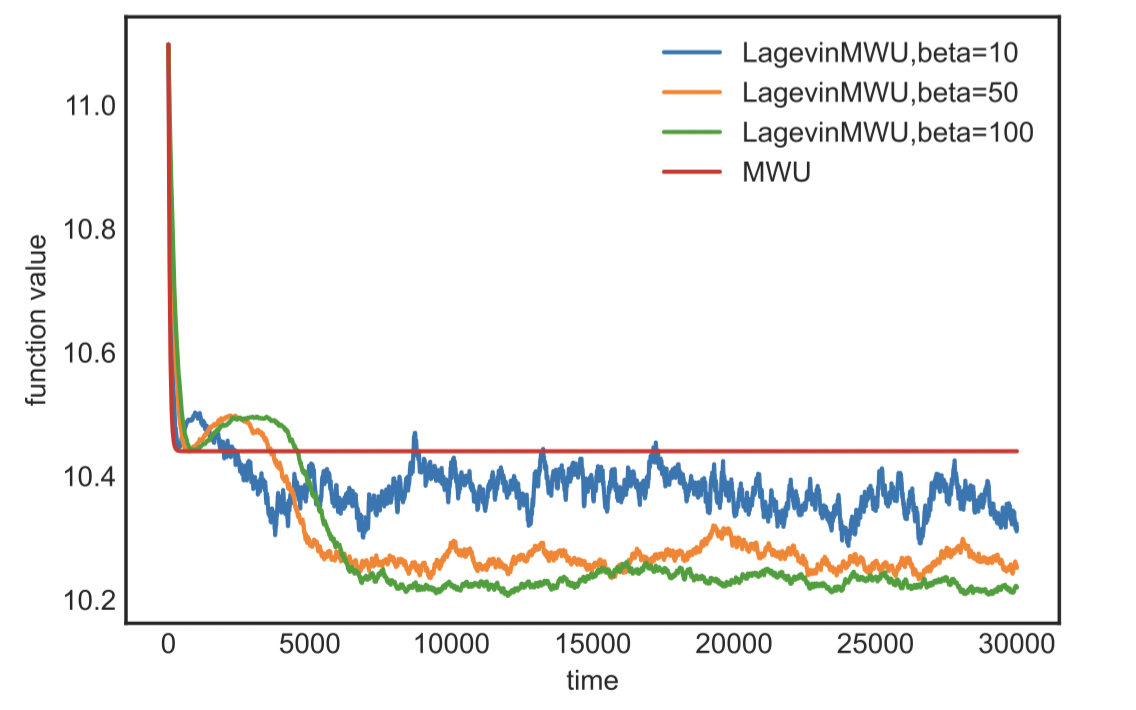}
    \caption{Convergence curves for $f_1$}
    \label{f1_convergence}
\end{subfigure}
\caption{Test function $f_1$}
\label{f_1}
\end{figure}

\begin{figure}[H]
\centering
\begin{subfigure}[b]{0.3\textwidth}
    \centering
    \includegraphics[width=\textwidth]{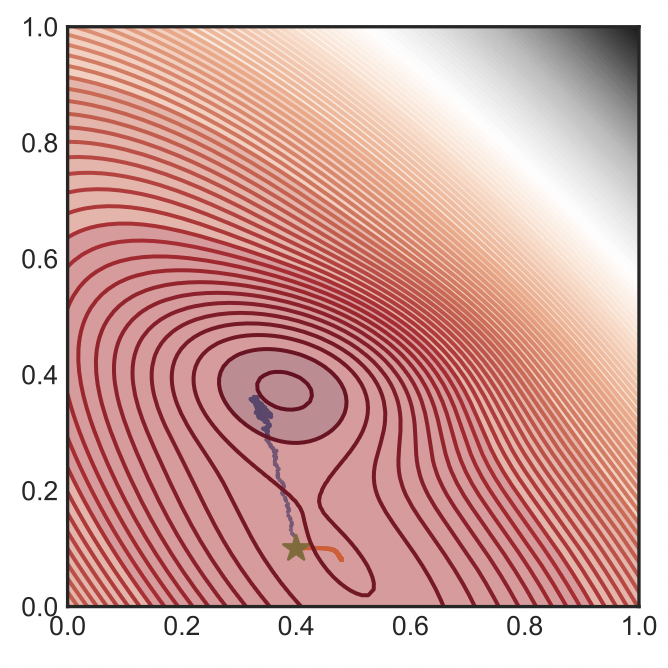}
    \caption{Trajectories for $f_2$}
    \label{f2_trajectories}
\end{subfigure}
\begin{subfigure}[b]{0.3\textwidth}
    \centering
    \includegraphics[width=\textwidth]{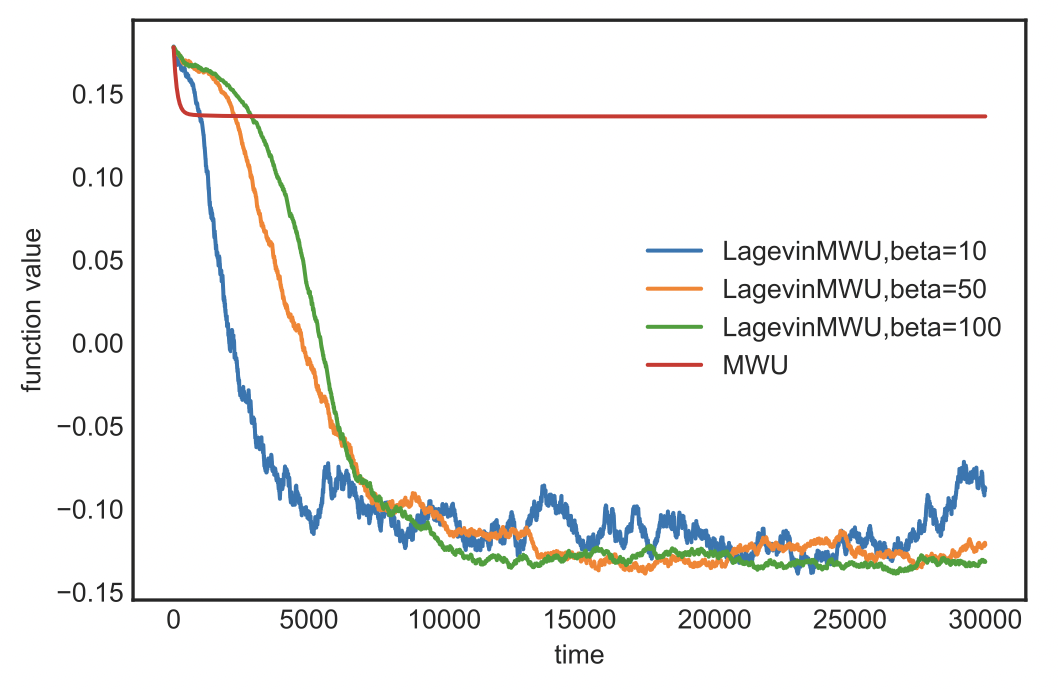}
    \caption{Convergence curves for $f_2$}
    \label{f2_convergence}
\end{subfigure}
\caption{Test function $f_2$}
\label{f_2}
\end{figure}

\begin{figure}[H]
\centering
\begin{subfigure}[b]{0.3\textwidth}
    \centering
    \includegraphics[width=\textwidth]{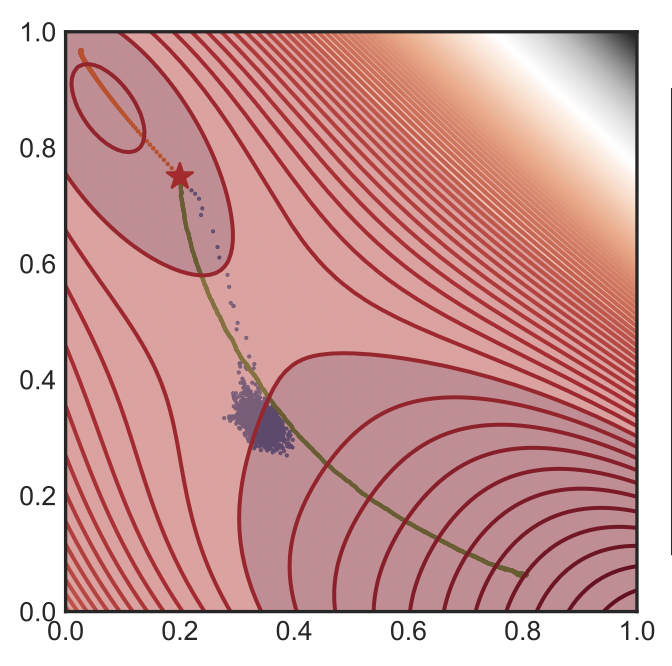}
    \caption{Trajectories for $f_3$}
    \label{f3_trajectories}
\end{subfigure}
\begin{subfigure}[b]{0.3\textwidth}
    \centering
    \includegraphics[width=\textwidth]{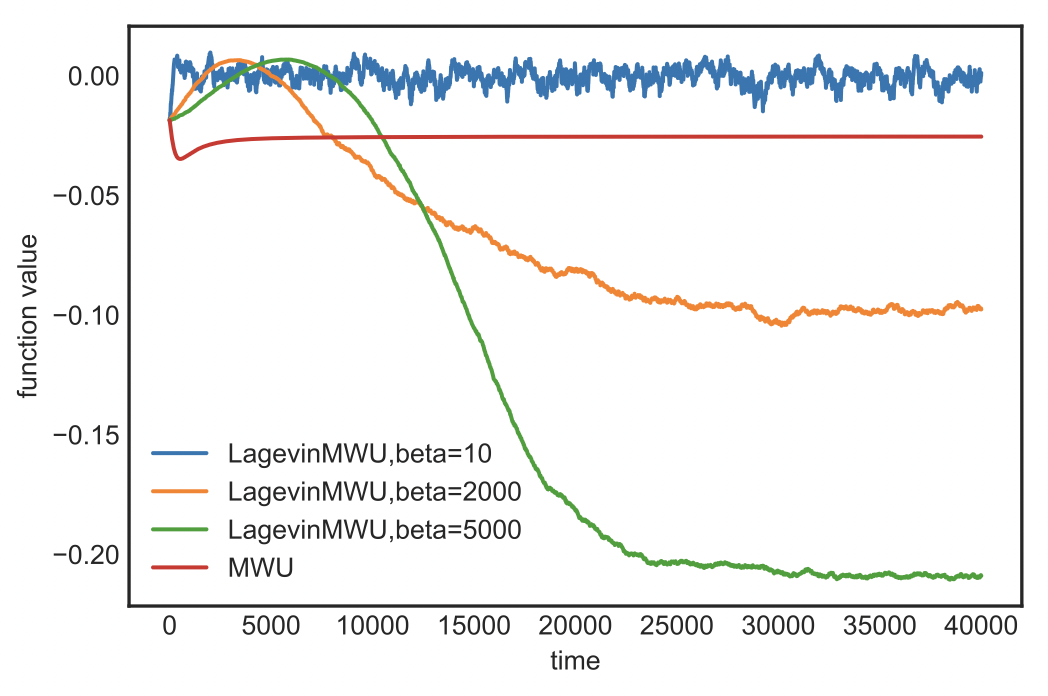}
    \caption{Convergence curves for $f_3$}
    \label{f3_convergence}
\end{subfigure}
\caption{Test function $f_3$}
\label{f_3}
\end{figure}

\begin{figure}[H]
\centering
\begin{subfigure}[b]{0.3\textwidth}
    \centering
    \includegraphics[width=\textwidth]{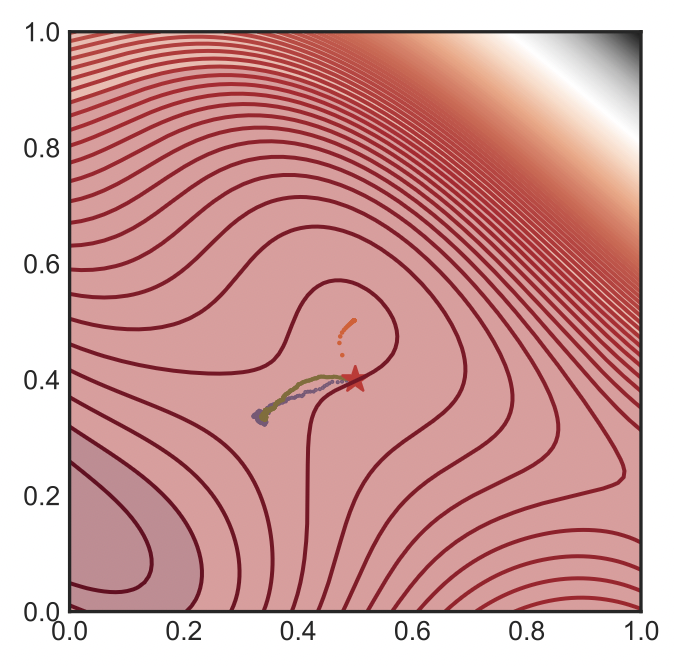}
    \caption{Trajectories for $f_4$}
    \label{f4_trajectories}
\end{subfigure}
\begin{subfigure}[b]{0.3\textwidth}
    \centering
    \includegraphics[width=\textwidth]{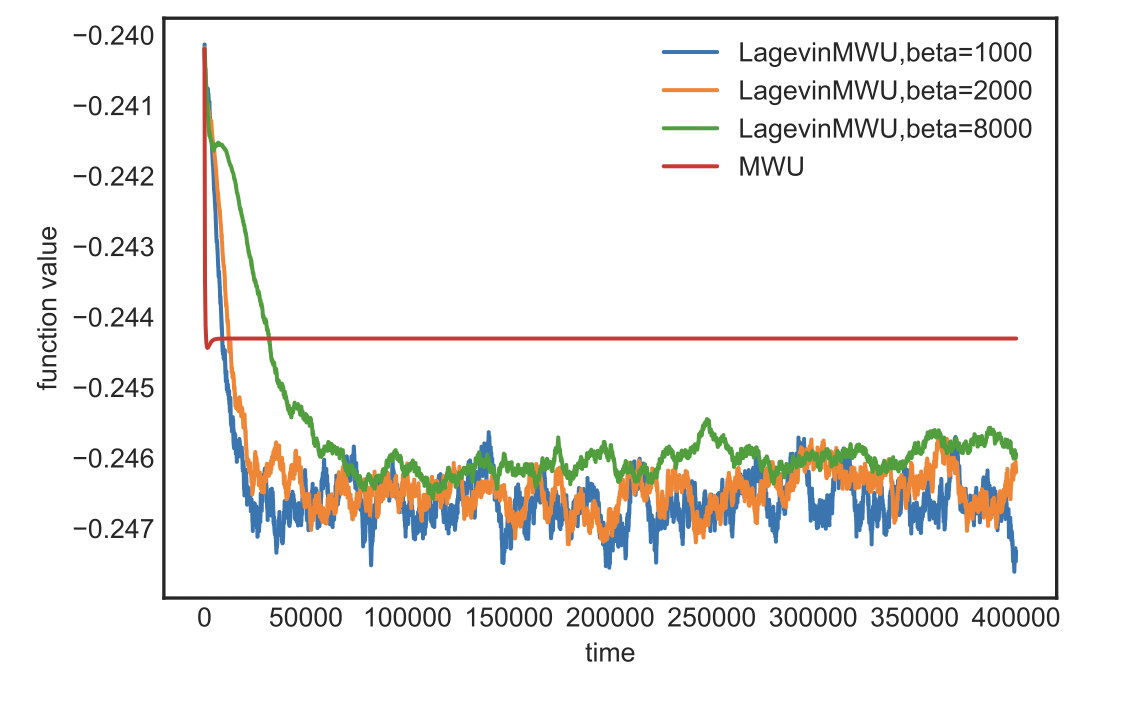}
    \caption{Convergence curves for $f_4$}
    \label{f4_convergence}
\end{subfigure}
\caption{Test function $f_4$}
\label{f_4}
\end{figure}

\begin{figure}[H]
\centering
\begin{subfigure}[b]{0.3\textwidth}
    \centering
    \includegraphics[width=\textwidth]{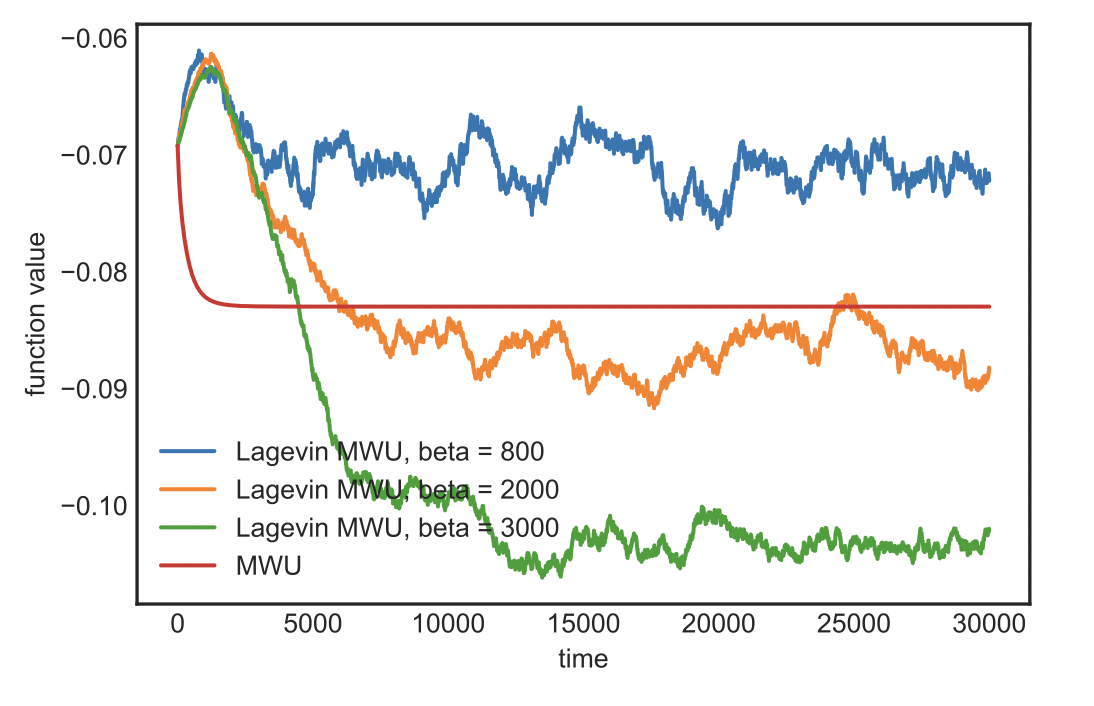}
    \caption{Convergence curves of $f_5$}
    \label{f5_convergence}
\end{subfigure}
\begin{subfigure}[b]{0.3\textwidth}
    \centering
    \includegraphics[width=\textwidth]{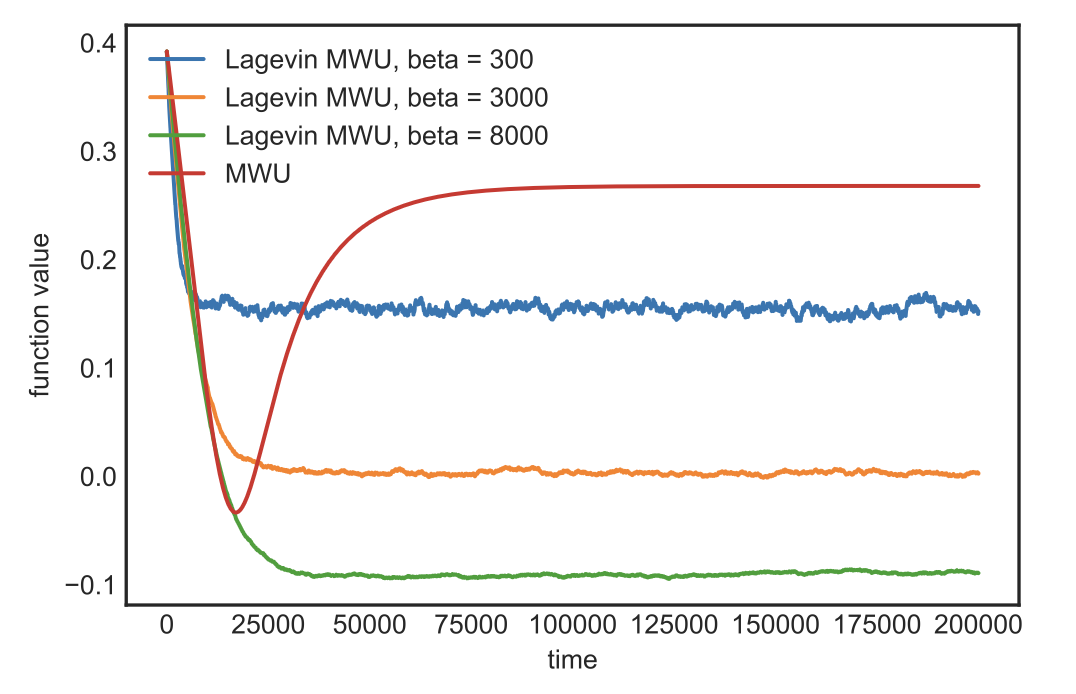}
    \caption{Convergence curves of $f_6$}
    \label{f6_convergence}
\end{subfigure}
\caption{Test functions $f_5$ and $f_6$}
\label{f_56}
\end{figure}

%\begin{figure}[htp]
%\centering
%\subfigure[Tar]{
%\includegraphics[clip,width=0.5\columnwidth]{Value_6.png}
%}
%\caption{Test function $f_6(x,y,z,w,v,h)$}
%\label{f_6}
%\end{figure}

\begin{comment}
\subsection{Single-agent potential game}

\[
-\ln\left(e^{-10(x-0.3)^2-20(y-0.5)^2-30(z-0.2)^2}+e^{-30(x-0.4)^2-20(y-0.2)^2-36(z-0.4)^2}\right)+y
\]

\subsection{Multi-agent potential game}
\end{comment}

\section{More Background on Analysis on Manifolds}
The standard references for the necessary background are \cite{JLee,Hsu}.
\subsection{Covariant Derivative}
Given two vector fields $X$ and $Y$ on $\mathcal{M}$, the covariant derivative, $D_XY$ is a bilinear operator with the following properties:
\begin{align*}
D_{\alpha_1X_1+\alpha_2X_2}Y&=\alpha_1D_{X_1}Y+\alpha_2D_{X_2}Y,
\\
D_X(Y_1+Y_2)&=D_X(Y_1)+D_X(Y_2),
\\
D_X(\alpha Y_1)&=\alpha D_X(Y_1)+X(\alpha)Y_1.
\end{align*}
In local coordinate systems with variable $x$, the vector field $X$ can be represented as $X=\sum X_i\partial x_i$, where $\partial x_i$ are the basis vector fields, and $Y=\sum Y_i\partial x_i$, the covariant derivative is given by 
\[
D_XY=\sum_k\left(X(Y_k)+\sum_i\sum_jX_iY_j\Gamma^k_{ij}\right)\partial x_k.
\]
where the Christoffel symbols $\Gamma_{ij}^k$ will be used and computed explicitly in deriving the Langevin MWU algorithm. With the basis $\{\partial x_i\}$, the covariant derivative and Christoffel symbols are related as follows,
\[
D_{\partial x_j}\partial x_i=\sum_k\Gamma_{ij}^k\partial x_k
\]
and the $\Gamma_{ij}^k$ can be computed by
\[
\Gamma_{ij}^k=\frac{1}{2}\sum_mg^{km}(\partial_jg_{mi}+\partial_ig_{mj}-\partial_mg_{ij}).
\]

\subsection{Differential Operators on Manifold}
In local coordinate systems, denote $\abs{g}=\det{g_{ij}(x)}$, the differential operators on manifold can be written as

\[
\Div V=\frac{1}{\sqrt{\abs{g}}}\sum_i\frac{\partial}{\partial x_i}\left(\sqrt{\abs{g}}V_i\right)
\]

The Laplace-Beltrami operator is

\begin{align}
\Delta_{M}f&=\Div(\grad f)
\\
&=\frac{1}{\sqrt{\abs{g}}}\sum_i\frac{\partial}{\partial x_i}\left(\sqrt{\abs{g}}\sum_jg^{ij}\frac{\partial f}{\partial x_j}\right)
\\
&=\sum_{i}b_i\frac{\partial f}{\partial x_i}+\sum_{i,j}g^{ij}\frac{\partial^2f}{\partial x_i\partial x_j},
\end{align}
where
\[
b_i=\frac{1}{\sqrt{\abs{g}}}\sum_j\frac{\partial (\sqrt{\abs{g}}g^{ij})}{\partial x_j}=\sum_{j,k}g^{jk}\Gamma^i_{jk}
\]
\section{Derivation of Algorithm}
The standard geometry used in simplex is a special type of Riemannian geometry on the positive orthant and interior of simplex, called \emph{Shahshahani geometry} \cite{Shahshahani,HofSig}. The metric matrix $\{g_{ij}(\vec{x})\}$ on $\mathbb{R}_+^{d}=\{\vec{x}:x_i>0\text{ for all }i\in[d]\}$ is diagonal with $g_{ii}(\vec{x}=\frac{\abs{\vec{x}}}{x_i})$ where $\abs{\vec{x}}
=\sum_jx_j$. Use the Riemannian gradient \eqref{grad}, we have the explicit form of the Shahshahani gradient for $\vec{x}\in\mathbb{R}_+^d$ as follows:
\[
\grad f(\vec{x})=g^{-1}\nabla f(\vec{x})=\left(\frac{x_1}{\abs{\vec{x}}}\frac{\partial f}{\partial x_1},...,\frac{x_d}{\abs{\vec{x}}}\frac{\partial f}{\partial x_d}\right).
\]

Viewing $\Delta_+^{d-1}$ as a Riemannian submanifold of $\mathbb{R}_+^{d}$, we endow the simplex with a Riemannian metric whose matrix satisfies $g_{ii}(\vec{x})=\frac{1}{x_i}$ on diagonal and $g_{ij}=0$ on all other entries. The tangent space of $\Delta_+^{d-1}$ at $\vec{x}$ is denoted by $T_{\vec{x}}$ which consists of all the vectors $\vec{v}=(v_1,...,v_d)$ such that $\sum_jv_j=0$. Thus the tangent space $T_{\vec{x}}\Delta_+^{d-1}$ is identified with the hyperplane passing through $\vec{0}$ and parallel to $\Delta_+^{d-1}$. In the derivation of the L-MWU algorithm, the geometric property used most frequently is the othogonality in $\mathbb{R}_+^d$. Let $\langle\cdot,\cdot\rangle_{\vec{x}}$ be the Riemannian metric (a space-dependent inner product on $T_{\vec{x}}\Delta_+^{d-1}$), i holds that for all $\vec{u}\in T_{\vec{x}}\Delta_+^{d-1}$ and any $\lambda\ne 0$, we have $\langle\vec{u},\lambda\vec{x}\rangle_{\vec{x}}=0$. This means that the straight line passing through $\vec{0}$ and $\vec{x}$ is orthogonal to the tangent space of $\Delta_+^{d-1}$, with respect to the Shahshahani metric on $\mathbb{R}_+^d$. With these background in Shahshahani geometry of simplex, MWU can be viewed as the Riemannian gradient descent, especially the linear variant \eqref{MWUclassic} is the Riemannian gradient descent with retraction as the projection mapping from tangent space onto the base manifold.
\subsection{Brownian motion in Riemannian manifold}
In locally coordinate systems, Brownian motion can be written in the following way:
\[
dX_t=-\frac{1}{2}g^{ij}\Gamma^k_{ij}dt+\sqrt{g^{-1}}dB_t
\]

The standard Brownian motion in $\mathbb{R}^n$ can be generated by the diffusion equation 
\[
\frac{\partial \rho}{\partial t}=\frac{1}{2}\Delta \rho.
\]

The Brownian motion on manifold can be seen to be generated by the Laplace-Beltrami operator. Let $\sigma=\{\sigma_{ij}\}$ be the unique symmetric square root of $g^{-1}=\{g^{ij}\}$. In local coordinate, the solution of the stochastic differential equation for a process $x_t=(x_t^1,...,x_t^n)$:
\[
dx_t^i=\frac{1}{2}b_i(x_t)dt+\sum_j\sigma_{ji}(x_t)dB^j_t,
\]
that is a diffusion process generated by $\frac{1}{2}\Delta_M$, i.e. $x_t$ is a Brownian motion on $M$.
%\begin{align}
%\frac{\partial \rho}{\partial t}&=\frac{1}{2}\Delta_M\rho
%\\
%&=\frac{1}{\sqrt{\abs{g}}}\sum_{i=1}^n\frac{\partial}{\partial x_i}\left(\sqrt{\abs{g}}\sum_{i=1}^ng^{ij}\frac{\partial \rho}{\partial x_j}\right)
%\\
%&=\frac{1}{\sqrt{\abs{g}}}\sum_i^n\frac{\partial}{\partial x_i}\left(\sqrt{\abs{g}}g^{-1}\nabla\rho\right)_i
%\\
%&=\frac{1}{\sqrt{\abs{g}}}\nabla\cdot\left(\sqrt{\abs{g}}g^{-1}\nabla\rho\right)
%\end{align}

\begin{theorem}[Fokker-Planck Equation]
For any stochastic differential equation of the form
\[
dx_t=\mu(x_t,t)dt+\sqrt{A(x_t,t)}dB_t,
\]
the probability density of the SDE is given by the diffusion equation
\[
\frac{\partial\rho(x,t)}{\partial t}=-\sum_{i=1}^n\frac{\partial}{\partial x_i}\left(\mu_i(x,t)\rho(x,t)\right)+\frac{1}{2}\sum_{i=1}^n\sum_{j=1}^n\frac{\partial^2}{\partial x_i\partial x_j}\left(A_{ij}(x,t)\rho(x,t)\right)
\]
\end{theorem}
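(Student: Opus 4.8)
The plan is to prove this classical forward Kolmogorov (Fokker--Planck) identity via the standard weak formulation: apply Itô's formula to a test function, take expectations to kill the stochastic integral, and then integrate by parts to transfer all derivatives onto the density. Write $\sigma(x,t)$ for a matrix square root $\sqrt{A(x,t)}$, so that $\sigma\sigma^\top=A$, and fix an arbitrary test function $\phi\in C_c^\infty(\mathbb{R}^n)$. First I would apply the multidimensional Itô formula to $\phi(x_t)$; using the quadratic covariation $d\langle x^i,x^j\rangle_t=A_{ij}(x_t,t)\,dt$, this gives
\[
d\phi(x_t)=\sum_i\frac{\partial\phi}{\partial x_i}\mu_i\,dt+\sum_{i,j}\frac{\partial\phi}{\partial x_i}\sigma_{ij}\,dB_t^j+\frac12\sum_{i,j}\frac{\partial^2\phi}{\partial x_i\partial x_j}A_{ij}\,dt.
\]
Taking expectations and using that the Itô integral is a mean-zero martingale, the middle term drops, leaving $\frac{d}{dt}\mathbb{E}[\phi(x_t)]=\mathbb{E}[\mathcal{L}\phi(x_t)]$, where $\mathcal{L}\phi=\sum_i\mu_i\partial_i\phi+\tfrac12\sum_{i,j}A_{ij}\partial_i\partial_j\phi$ is the infinitesimal generator of the diffusion.

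Next I would rewrite both sides as integrals against the density $\rho(\cdot,t)$. Since $\mathbb{E}[\phi(x_t)]=\int_{\mathbb{R}^n}\phi(x)\rho(x,t)\,dx$, the left-hand side becomes $\int\phi\,\partial_t\rho\,dx$, while the right-hand side is $\int(\mathcal{L}\phi)\rho\,dx$. The key manipulation is integration by parts: the drift term transfers one derivative to yield $-\int\phi\sum_i\partial_i(\mu_i\rho)\,dx$, and the diffusion term transfers two derivatives to yield $\int\phi\,\tfrac12\sum_{i,j}\partial_i\partial_j(A_{ij}\rho)\,dx$, the boundary contributions vanishing because $\phi$ has compact support. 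Equating the two expressions gives
\[
\int_{\mathbb{R}^n}\phi\left[\partial_t\rho+\sum_i\partial_i(\mu_i\rho)-\tfrac12\sum_{i,j}\partial_i\partial_j(A_{ij}\rho)\right]dx=0,
\]
and since $\phi$ is arbitrary, the fundamental lemma of the calculus of variations forces the bracketed quantity to vanish, which is exactly the claimed equation.

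The main obstacle is regularity and the justification of the formal steps, not the algebra. Itô's formula needs only $\phi\in C^2$, which the test-function choice guarantees, but exchanging $\frac{d}{dt}$ with the spatial integral and integrating by parts on $\rho$ presupposes that a density exists and is smooth enough to absorb two derivatives; in the stated generality this is not automatic. I would therefore note that the cleanest rigorous statement is the distributional identity in the second display, with the pointwise PDE following once one assumes enough nondegeneracy (uniform ellipticity of $A$, or a Hörmander-type condition) for the transition law to admit a smooth density. This is exactly the situation in the Shahshahani setting of interest, where the noise matrix $A=2\beta^{-1}g^{-1}$ is positive definite on the interior of the simplex, so the conclusion upgrades to the classical pointwise Fokker--Planck equation used elsewhere in the paper.
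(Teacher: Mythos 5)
Your derivation is the standard and correct one: Itô's formula applied to a compactly supported test function, expectation to eliminate the martingale term, integration by parts to pass from the generator $\mathcal{L}$ to its formal adjoint, and the fundamental lemma of the calculus of variations to extract the pointwise PDE. The paper itself offers no proof of this statement --- it is quoted as classical background for deriving the algorithm --- so there is no authorial argument to compare against; your version supplies what the paper omits, and your closing remark about when the weak (distributional) identity upgrades to the classical pointwise equation (existence of a sufficiently smooth density, e.g.\ under uniform ellipticity of $A$) is a caveat the paper's bare statement glosses over and is worth keeping, since on the Shahshahani simplex the diffusion matrix degenerates at the boundary even though it is positive definite in the interior.
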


\subsection{Langevin algorithm in Shahshahani manifold}
In this subsection we show that the algorithm \ref{alg:C} is discretization and approximation of Langevin based Riemannian gradient descent on Shahshahani manifold.
\begin{proposition}
Algorithm \ref{alg:C} that is given as follows,
\[
x_i\leftarrow\frac{x_i-\epsilon x_i\frac{\partial f}{\partial x_i}+V_0^i}{1-\epsilon\sum_{j=1}^nx_j\frac{\partial f}{\partial x_j}+\sum_{j=1}^nV_0^j}
\]
is Langevin gradient descent that scaled with Shahshahani geometry in simplex.
\end{proposition}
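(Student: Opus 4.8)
The plan is to exhibit the update as one Euler--Maruyama step of the Shahshahani--Langevin diffusion followed by a retraction back onto the simplex, and then to match the three pieces of the numerator term by term. Write the ambient increment as $w_i=-\epsilon x_i\frac{\partial f}{\partial x_i}+V_0^i$, so that the update is $x_i\leftarrow(x_i+w_i)/\sum_j(x_j+w_j)$. First I would record the relevant Shahshahani data on $\Delta_+^{d-1}$, where $\abs{\vec{x}}=1$: the inverse metric is diagonal with $g^{ii}=x_i$, so the Shahshahani gradient has components $[\grad f]_i=x_i\frac{\partial f}{\partial x_i}$, which identifies the deterministic term $-\epsilon x_i\frac{\partial f}{\partial x_i}$; and $\sqrt{2\beta^{-1}g^{-1}}$ is diagonal with entries $\sqrt{2\beta^{-1}x_i}$, which after the discretization $dB_t\mapsto\sqrt{\epsilon}\,z_0^i$ identifies the stochastic term $\sqrt{2\epsilon\beta^{-1}x_i}\,z_0^i$ of $V_0^i$.

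Next I would compute the Christoffel symbols $\Gamma_{ij}^k$ of the Shahshahani metric explicitly and contract them to obtain the drift of the geometric Brownian motion, i.e.\ the first-order coefficient $b^k$ in $\Delta_M=\sum_{ij}g^{ij}\partial_i\partial_j+\sum_k b^k\partial_k$, computable either as $\frac{1}{\sqrt{\abs{g}}}\sum_j\partial_j(\sqrt{\abs{g}}\,g^{jk})$ or by contracting $g^{ij}\Gamma_{ij}^k$. The goal of this step is to show that $-\beta^{-1}\sum_{ij}g^{ij}\Gamma_{ij}^k$, discretized over a step of size $\epsilon$, reproduces the drift part $\frac{\epsilon}{2\beta}\!\left(n+1-(1+x_i)S_{\vec{x}}\right)$ of $V_0^i$; here the reciprocal sum $S_{\vec{x}}=\sum_j 1/x_j$ is exactly what the volume factor $\sqrt{\abs{g}}=\prod_i x_i^{-1/2}$ and its logarithmic derivative produce. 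Combining the three identifications exhibits $-\epsilon x_i\frac{\partial f}{\partial x_i}+V_0^i$ as the Euler--Maruyama increment of the Shahshahani--Langevin SDE $dX_t=-\grad f\,dt-\beta^{-1}\sum_{ij}g^{ij}\Gamma_{ij}^k\,dt+\sqrt{2\beta^{-1}g^{-1}}\,dB_t$ in the ambient chart $\mathbb{R}_+^d$.

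Finally I would show the normalization is the retraction onto $\Delta$ that implements the Shahshahani-orthogonal projection of the ambient step onto $T_{\vec{x}}\Delta=\{v:\sum_i v_i=0\}$. Since $\langle u,\lambda\vec{x}\rangle_{\vec{x}}=0$ for all $u\in T_{\vec{x}}\Delta$, the vector $\vec{x}$ is the Shahshahani normal, and a first-order expansion of $(\vec{x}+w)/\abs{\vec{x}+w}$ gives $\Proj(w)_k=w_k-x_k\sum_j w_j$, which is precisely this orthogonal projection and keeps $\sum_i x_i=1$ exactly, so the map is a bona fide retraction. I expect the main obstacle to be the interaction between this nonlinear normalization and the state-dependent noise: expanding $(\vec{x}+w)/\abs{\vec{x}+w}$ to second order produces Itô correction terms, and one must verify both that the projected noise has covariance $2\epsilon\beta^{-1}(x_k\delta_{kl}-x_kx_l)=2\epsilon\beta^{-1}g^{kl}$ and that the residual quadratic-in-noise terms combine with the drift so that the limiting generator is exactly the Fokker--Planck operator \eqref{FP}. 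This bookkeeping — the explicit Christoffel contraction of Step~2 together with the Itô/projection reconciliation here — is the crux; once it closes, the update rule is identified with discretized Shahshahani-scaled Langevin gradient descent.
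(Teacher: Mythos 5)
Your proposal follows essentially the same route as the paper's proof: identify $-\epsilon x_i\frac{\partial f}{\partial x_i}$ with the Shahshahani gradient via $g^{ii}=x_i$, obtain the drift $\frac{\epsilon}{2\beta}\left(n+1-(1+x_i)S_{\vec{x}}\right)$ by explicitly computing and contracting the Christoffel symbols $\sum_{j,k}g^{jk}\Gamma^i_{jk}$, read off the noise scaling from $\sigma_{ii}=\sqrt{x_i}$, and interpret the normalization as the retraction realizing Shahshahani-orthogonal projection onto $T_{\vec{x}}\Delta$. The only divergence is that the "crux" you flag at the end --- reconciling the It\^{o} corrections from the nonlinear normalization with the generator --- is not carried out in the paper either; its proof stops at the first-order formal identification of the increment and treats the projection as an exact retraction, so on that point your plan is actually more demanding than the published argument.
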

\begin{proof}
MWU with noise can be implemented by gradient descent on the simplex with Shahshahani metric with a Gaussian noise in the tangent space. Consider the open simplex $\Delta_n\subset\mathbb{R}^{n}$,

\[
\Delta_n=\left\{\vec{x}=(x_1,...,x_n)\in\mathbb{R}^n:x_i>0\ \ \text{for all}\ \ i,\sum_{i=1}^nx_i=1\right\}
\]
and its tangent spaces are identified with the hyperplane through origin,
\[
T_n=\left\{\vec{v}=(v_1,...,v_n)\in\mathbb{R}^n:\sum_{i=1}^n=0\right\}.
\]
The exponential map $\Exp:\Delta_n\times T_n\rightarrow\Delta_n$,
\[
(\vec{x},\vec{v})\mapsto\sum_{i=1}^n\frac{x_ie^{v_i}}{\sum_{j=1}^nx_je^{v_j}}\vec{e}_i.
\]
We will use the restrictions notation $\Exp_{\vec{x}}(\vec{v})$ as well. The coordinate system is given by 
\[
\varphi(\vec{x})=(x_1,...,x_{n-1}) 
\]
and the parametrization $\varphi^{-1}$ is
\[
\varphi^{-1}(x_1,...,x_{n-1})=(x_1,...,x_{n-1},1-\sum_{i=1}^{n-1}x_i).
\]
The metric matrix pulled back from $\Delta_n$ to the hypersurface $(x-1,...,x_{n-1},0)$ is given by
\[
\bar{g}_{ij}=(d\varphi^{-1})^{\top}g_{ij}d\varphi^{-1}, \ \ i,j\in [n-1].
\]
Next we derive the expression of noise based on the Brownian motion in the projected simplex $\varphi(\Delta_n)$. The Brownian motion (generated by full Laplace-Beltrami instead of $\frac{1}{2}\Delta_M$) pull back into the tangent space, according to the local expression, is
\[
dX_t=b(X_t)dt+\sqrt{2\bar{g}^{-1}}dB_t
\]
where the basis is $\vec{e}_1,...,\vec{e}_{n-1}$ and 
\[
b_i(X_t)=\sum_{j,k}\bar{g}^{jk}\Gamma^i_{jk}.
\]
Denote $\sigma_{ij}$ to be the unique matrix such that $\sigma^2=g^{-1}$, the Riemannian noise with stepsize $\epsilon$ can be written as
\[
W_0^i=\epsilon\sum_{j,k}\bar{g}^{jk}\Gamma^i_{jk}+\sqrt{2\epsilon}\sum_j\sigma_{ji}z^j_0
\]
where $z^j_0\sim\mathcal{N}(0,1)$. Then the push-forward noise of $W_0$ to $T_{\vec{x}}\Delta_n$ by $\varphi^{-1}$ is 
\[
\xi_0=d\varphi^{-1}(W_0).
\] 
One can compute the Christoffel symbols according to the rule
\[
\Gamma^i_{jk}=\frac{1}{2}\sum_m\left(\frac{\partial g_{km}}{\partial x_j}+\frac{\partial g_{mj}}{\partial x_k}-\frac{\partial g_{jk}}{\partial x_m}\right)g^{mi}.
\]
An simple approach to compute the noise is to think of the noise or Brownian motion on $\Delta_n$ as the projection of noise or Brownian motion in the ambient space $\mathbb{R}^n_+$ onto $\Delta_n$. Let $g_{ij}$ be the Shahshahani metric on $\mathbb{R}^n_+$, according to the previous derivation, denote $V_0$ as the noise, we have the following expression 
\[
V^i_0=\epsilon\sum_{j,k}g^{jk}\Gamma^i_{jk}+\sqrt{2\epsilon}\sum_j\sigma_{ji}z_0^j
\]
where $j\in[n]$ and $\Gamma^i_{jk}$ is the Christoffel symbol of $g_{ij}$. Next we compute them explicitly. Recall that $g_{ij}=\diag\{\frac{1}{x_i}\}$, and we have the following calculation:

One can compute the Christoffel symbols according to the rule
 \[
 \Gamma^i_{jk}=\frac{1}{2}\sum_m\left(\frac{\partial g_{km}}{\partial x_j}+\frac{\partial g_{mj}}{\partial x_k}-\frac{\partial g_{jk}}{\partial x_m}\right)g^{mi}
 \]
 Note that $g_{ij}=0$ for all $i\ne j$, then we have
 \begin{align}
 \Gamma^i_{jk}&=\frac{1}{2}\sum_m\left(\frac{\partial g_{km}}{\partial x_j}+\frac{\partial g_{mj}}{\partial x_k}-\frac{\partial g_{jk}}{\partial x_m}\right)g^{mi}
 \\
 &=\frac{1}{2}\left(\left(\sum_m\frac{\partial g_{km}}{\partial x_j}\right)g^{mi}+\left(\sum_m\frac{\partial g_{mi}}{\partial x_k}\right)g^{mi}-\left(\sum_m\frac{\partial g_{jk}}{\partial x_m}\right)g^{mi}\right)
 \\
 &=\frac{1}{2}\left(\frac{\partial g_{kk}}{\partial x_j}g^{kj}+\frac{\partial g_{jj}}{\partial x_k}g^{ji}-\frac{\partial g_{jk}}{\partial x_i}g^{ii}\right)
 \end{align} 
 
If $j=k\ne i$
\begin{align}
\Gamma_{jk}^i&=\Gamma_{jj}^i=\frac{1}{2}\left(\frac{\partial g_{jj}}{\partial x_j}g^{jj}+\frac{\partial g_{jj}}{\partial x_j}g^{ji}-\frac{\partial g_{jj}}{\partial x_i}g^{ii}\right)
\\
&=\frac{1}{2}\left(\frac{\partial g_{jj}}{\partial x_j}g^{jj}-\frac{\partial g_{jj}}{\partial x_i}g^{ii}\right)
\\
&=\frac{1}{2}\left(\frac{x_j-\abs{\vec{x}}}{x_j\abs{\vec{x}}}-\frac{x_i}{x_j\abs{\vec{x}}}\right)
\\
&=\frac{1}{2}\frac{x_j-x_i-\abs{\vec{x}}}{x_j\abs{\vec{x}}}
\end{align}

if $i=j\ne k$
\begin{align}
\Gamma_{jk}^i&=\Gamma_{ik}^i=\frac{1}{2}\left(\frac{\partial g_{ii}}{\partial x_k}g^{ii}-\frac{g_{ii}}{\partial x_i}g^{ii}\right)
\\
&=\frac{1}{2}\left(\frac{1}{\abs{\vec{x}}}-\frac{x_i-\abs{\vec{x}}}{x_i\abs{\vec{x}}}\right)
\\
&=\frac{1}{2}\left(\frac{x_i-x_j+\abs{\vec{x}}}{x_i\abs{\vec{x}}}\right)
\\
&=\frac{1}{2x_i}
\end{align}

if $i=k\ne j$

\begin{align}
\Gamma_{jk}^i&=\Gamma_{ji}^i=\frac{1}{2}\left(\frac{\partial g_{jj}}{\partial x_i}g^{ji}-\frac{\partial g_{ji}}{\partial x_i}g^{ii}\right)
\\
&=\frac{1}{2}\left(-\frac{\partial g_{ji}}{\partial x_i}g^{ii}\right)
\\
&=0
\end{align}

If $i\ne j\ne k$

\begin{align}
\Gamma_{jk}^i=0
\end{align}

If $i=j=k$

\begin{align}
\Gamma_{jk}^i=\frac{1}{2}\frac{x_i-\abs{\vec{x}}}{x_i\abs{\vec{x}}}.
\end{align}

Next we compute the terms in $V_0^i$. Since $g^{jk}=0$ if $j\ne k$, we have
\begin{align}
\sum_{j,k}g^{jk}\Gamma_{jk}^i&=\Gamma_{ii}^i+\sum_{j\ne i}g^{jj}\Gamma_{jj}^i
\\
&=\frac{1}{2}\frac{x_i-\abs{\vec{x}}}{x_i\abs{\vec{x}}}+\sum_{j\ne i}\frac{1}{2}\frac{x_j-x_i-\abs{\vec{x}}}{x_j\abs{\vec{x}}}
\\
&=\frac{1}{2\abs{\vec{x}}}\left(\frac{x_i-\abs{\vec{x}}}{x_i}+\sum_{j\ne i}\frac{x_j-x_i-\abs{\vec{x}}}{x_j}\right)
\\
&=\frac{1}{2\abs{\vec{x}}}\left(\frac{x_i-\abs{\vec{x}}}{x_i}+\sum_{j\ne i}\left(\frac{x_j-\abs{\vec{x}}}{x_j}-\frac{x_i}{x_j}\right)\right)
\end{align}
Since $\vec{x}$ is on the simplex, i.e., $\abs{\vec{x}}=1$, the above expression can be simplified to
\begin{align}
&\frac{1}{2}\left(\frac{x_i-1}{x_i}+\sum_{j\ne i}\left(1-\frac{1}{x_j}-\frac{x_i}{x_j}\right)\right)
\\
&=\frac{1}{2}\left(1-\frac{1}{x_i}+(n-1)-\sum_{j\ne i}\frac{1}{x_j}-\sum_{j\ne i}\frac{x_i}{x_j}\right)
\\
&=\frac{1}{2}\left(n-\frac{1}{x_i}-\sum_{j\ne i}\frac{1}{x_j}-\sum_{j\ne i}\frac{x_i}{x_j}\right)
\\
&=\frac{1}{2}\left(n-\sum_{j}\frac{1}{x_j}-\sum_{j\ne i}\frac{x_i}{x_j}-\frac{x_i}{x_i}+1\right)
\\
&=\frac{1}{2}\left(n+1-\sum_{j}\frac{1}{x_j}-\sum_{j}\frac{x_i}{x_j}\right),
\end{align} 
on the other hand, 
\[
\sigma_{ii}=\sqrt{x_i}\ \ \text{and}\ \ 0\ \text{for }j\ne i.
\]
So we have
\begin{align}
V_0^i&=\epsilon\cdot\frac{1}{2}\left(n+1-\sum_{j}\frac{1}{x_j}-\sum_{j}\frac{x_i}{x_j}\right)+\sqrt{2\epsilon}\cdot\sqrt{x_i}z_0^i
\\
&=\frac{\epsilon}{2}\left(n+1-\sum_{j}\frac{1}{x_j}-\sum_{j}\frac{x_i}{x_j}\right)+\sqrt{2\epsilon x_i}z_0^i
\end{align}

Recall the exponential map from tangent space of Shahshahani manifold to the base manifold is
$\Exp_{\vec{x}}\left(\vec{v}\right)$,
in order to simplify the algorithm, we use orthogonal projection from the tangent space to the underlying manifold, instead of using the exact exponential map. In the Euclidean space, the manifold gradient descent for a function defined on $M\subset\mathbb{R}^d$ we mean the algorithm
\[
\vec{x}_{t+1}=\Retr_{\vec{x}_t}\left(-\epsilon\mathcal{P}_{T_{\vec{x}}}\nabla f(\vec{x}_t)\right)
\]
where $\mathcal{P}_{T_{\vec{x}}}\nabla f(\vec{x}_t)$ is the orthogonal projection of $\nabla f(\vec{x}_t)$ onto the tangent space $T_{\vec{x}_t}M$ with respect to the Euclidean metric on the ambient space $\mathbb{R}^d=T_{\vec{x}_t}\mathbb{R}^d$. To derive the Langevin MWU, we firstly generalize the manifold gradient on submanifold of Euclidean space to the case when the ambient space is a general Riemannian manifold $N$. Let $\grad_{N}f$ be the Riemannian gradient of $f$ on $N$, then the generalized gradient descent on the submanifold $M\subset N$ is of the following form:
\[
\vec{x}_{t+1}=\Retr_{\vec{x}_t}\left(-\epsilon\mathcal{P}_{T_{\vec{x}_t}}\grad_Nf(\vec{x}_t)\right)
\]
where the orthogonal projection of $\grad_Nf(\vec{x})$ onto the tangent space $T_{\vec{x}}M$ is based on the inner product $\langle,\rangle$ on the ambient tangent space $T_{\vec{x}}N$. For the case of Shahshahani manifold, let $N=\mathbb{R}^d_+$ and $M=\Delta_+^{d-1}$, and the orthogonal projection in the tangent space is with respect to the Shahshahani metric. Then for small $\epsilon>0$, the orthgonal projection of $-\epsilon\grad_Nf(\vec{x})$ onto $T_{\vec{x}}M$ is the vector obtained form the difference between the point $G(\vec{x})$ that is the normalization of $\vec{x}-\epsilon\grad f(\vec{x})$ onto the simplex and the initial point $\vec{x}$:
\begin{align}
-\vec{v}=-\mathcal{P}_{T_{\vec{x}}M}\vec{v}
=\mathcal{P}_{T_{\vec{x}}M}( \vec{v})
=G(\vec{x})-\vec{x}
\end{align}
where
\[
G(\vec{x})=\left(\frac{x_1- v_1}{1-\sum_jv_j},...,\frac{x_d- v_d}{1- \sum_jv_j}\right).
\]
Combined with the calculation of Brownian motion, we need to project the gradient and noise onto Shahshahani manifold,
\[
x_i\leftarrow\frac{x_i-\epsilon x_i\frac{\partial f}{\partial x_i}+V_0^i}{1-\epsilon\sum_{j}x_j\frac{\partial f}{\partial x_j}+\sum_j V_0^j}.
\]
So far we have completed the derivation of Langevin MWU when the reverse temperature $\beta=1$, it is trivial to scale $V_0^i$ with general $\beta$ to get the general form of Langevin MWU. The proof completes.
\end{proof}

\section{Missing Proofs}

\section{Proof of Lemma \ref{lemma:convergence1}}

\begin{proof}
Let $\vec{x},\vec{y}$ be two points on the Shahshahani manifold $M$ and $\vec{1}:=\frac{1}{n}(1,...,1)$. By the definition of geodesic, we claim that there exists a unique geodesic connecting $\vec{x}$ and $\vec{y}$. Especially this geodesic can be parametrized by length, i.e., there exists a parametrized curve $\gamma(t)$ with unit speed defined on $t\in[0,\ell]$ such that $\gamma(0)=\vec{x}$ and $\gamma(\ell)=\vec{y}$, where $\ell = d(\vec{x},\vec{y})$. Recall that $\langle,\rangle_{\gamma(t)}$ is the Riemannian metric at point $\gamma(t)$, for this pair of $\vec{x}$ and $\vec{y}$, we have the following
\begin{equation}
\begin{split}
f(\vec{y})-f(\vec{x})&=\int_0^{\ell}\langle\grad f(\gamma(t)),\gamma'(t)\rangle_{\gamma(t)}dt
\\
&\le \int_0^{\ell}\abs{\langle\grad f(\gamma(t)),\gamma'(t)\rangle_{\gamma(t)}}dt
\\
&\le\int_0^{\ell}\norm{\grad f(\gamma(t))}_{\gamma(t)}\cdot\norm{\gamma'(t)}_{\gamma(t)}dt.
\end{split}
\end{equation}
Suppose at $t=\xi\in[0,\ell]$, function $\norm{\grad f(\gamma(t))}_{\gamma(t)}\cdot\norm{\gamma'(t)}_{\gamma(t)}$ reaches its maximum value, then the above inequality can be simplified so that
\begin{equation}\label{eq:MVT}
f(\vec{y})-f(\vec{x})\le\norm{\grad f(\gamma(\xi))}_{\gamma(\xi)}\int_0^{\ell}\norm{\gamma'(\xi)}_{\gamma(\xi)}dt=\norm{\grad f(\gamma(\xi))}_{\gamma(\xi)}\cdot d(\vec{x},\vec{y}),
\end{equation}
where the last equality holds because $\gamma(t)$ is a unit speed geodesic. The assumption on $\grad f$ in the statement of this lemma gives that
\[
\norm{\grad f(\gamma(\xi))}_{\gamma(\xi)}\le c_1d(\vec{1},\gamma(\xi))+c_2.
\]
Since $\vec{1}$, $\vec{x}$ and $\vec{y}$ consist a geodesic triangle on $M$ and $\gamma(\xi)$ lies on the geodesic connecting $\vec{x}$ and $\vec{y}$, triangle inequality of the metric induced by Riemannian metric implies the following inequality,
\begin{equation}
\begin{split}
d(\vec{1},\vec{x})+d(\vec{x},\gamma(\xi))&\ge d(\vec{1},\gamma(\xi))
\\
d(\vec{1},\vec{y})+d(\vec{y},\gamma(\xi))&\ge d(\vec{1},\gamma(\xi))
\end{split}
\end{equation}
and then
\begin{equation}
\begin{split}
2d(\vec{1},\gamma(\xi))&\le d(\vec{1},\vec{x})+d(\vec{1},\vec{y})+d(\vec{x},\gamma(\xi))+d(\vec{y},\gamma(\xi))
\\
&=d(\vec{1},\vec{x})+d(\vec{1},\vec{y})+d(\vec{x},\vec{y})
\\
&\le 2\left(d(\vec{1},\vec{x})+d(\vec{1},\vec{y})\right).
\end{split}
\end{equation}
Combining with \ref{eq:MVT} we have
\begin{equation}
\begin{split}
f(\vec{y})-f(\vec{x})&\le \norm{\grad f(\gamma(\xi))}_{\gamma(\xi)}\cdot d(\vec{x},\vec{y})
\\
&\le\left(c_1d(\vec{1},\vec{x})+c_1d(\vec{1},\vec{y})+c_2\right)\cdot d(\vec{x},\vec{y}).
\end{split}
\end{equation}
Let $P$ be the coupling of $\mu$ and $\nu$, so that $W_2(\mu,\nu)^2=\mathbb{E}_Pd(X,Y)^2$ where $X\sim\mu$ and $Y\sim\nu$ respectively. Taking expectations of $f(\vec{x})-f(\vec{y})$, we have
\begin{equation}
\begin{split}
\int_M f\mu d\vol-\int_M f\nu d\vol=\mathbb{E}_P[f(X)-f(Y)]\le\mathbb{E}_P\abs{f(X)-f(Y)}.
\end{split}
\end{equation}

Moreover, 
\begin{equation}
\begin{split}
\mathbb{E}_P\abs{f(X)-f(Y)}&\le \mathbb{E}_P[(c_1d(\vec{1},\vec{x})+c_1d(\vec{1},\vec{y})+c_2)\cdot d(\vec{x},\vec{y})]
\\
&\le \sqrt{\mathbb{E}_P(c_1d(\vec{1},\vec{x})+c_1d(\vec{1},\vec{y})+c_2)^2}\sqrt{\mathbb{E}_Pd(\vec{x},\vec{y})^2}
\\
&\le \left(c_1\left(\mathbb{E}_{\mu}d(\vec{1},\vec{x})^2\right)^{\frac{1}{2}}+c_1\left(\mathbb{E}_{\nu}d(\vec{1},\vec{y})^2\right)^{\frac{1}{2}}+c_2\right)\cdot\sqrt{\mathbb{E}_Pd(\vec{x},\vec{y})^2}
\\
&=(c_1\sigma+c_2)W_2(\mu,\nu)
\end{split}
\end{equation}
We complete the proof by letting $c_1=\frac{M}{2}$ and $c_2=B$.
\end{proof}

\subsection{Proof of Lemma \ref{lemma:convergence2}}
\begin{proof}
Let $\nu(\vec{x})=\frac{e^{-\beta f(\vec{x})}}{\Lambda}$ denote the density function of the Gibbs measure with respect to the measure induced by the Shahshahani metric in simplex, where
\[
\Lambda = \int_Me^{-\beta f(\vec{x})}d\vol
\]
is the normalization constant known as the partition function. The differential entropy $h(\nu)$ is computed as follows,
\begin{equation}
\begin{split}
h(\nu)&=-\int_M\nu(\vec{x})\log\nu(x)d\vol
\\
&=-\int_M\frac{e^{-\beta f(\vec{x})}}{\Lambda}\log\frac{e^{-\beta f(\vec{x})}}{\Lambda}d\vol
\\
&=-\int_M\frac{e^{-\beta f(\vec{x})}}{\Lambda}(-\beta f(\vec{x})-\log\Lambda)d\vol
\\
&=\int_M\frac{e^{-\beta f(\vec{x})}}{\Lambda}(\beta f(\vec{x})+\log\Lambda)d\vol
\\
&=\int_M\frac{\beta f(\vec{x})e^{-\beta f(\vec{x})}}{\Lambda}d\vol+\int_M\frac{e^{-\beta f(\vec{x})}}{\Lambda}\log\Lambda d\vol
\\
&=\frac{1}{\Lambda}\int_M\beta f(\vec{x})e^{-\beta f(\vec{x})}d\vol+\log\Lambda
\end{split}
\end{equation} 
which implies
\[
\frac{h(\nu)}{\beta}=\frac{1}{\Lambda}\int_Mf(\vec{x})e^{-\beta f(\vec{x})}d\vol+\frac{\log\Lambda}{\beta}.
\]
Since $\nu(x)=\frac{e^{-\beta f(\vec{x})}}{\Lambda}$, we can furthermore obtain 
\[
\mathbb{E}_{\nu}f=\int_Mf(\vec{x})\nu(\vec{x})d\vol=\frac{h(\nu)}{\beta}-\frac{\log\Lambda}{\beta}
\]
Let $\vec{x}^*$ be any point that minimizes $f(\vec{x})$. Then $\grad f(\vec{x}^*)=0$. Since $f$ is assumed to be geodesically smooth, we have $f(\vec{x})-f(\vec{x}^*)\le\frac{M}{2}d(\vec{x},\vec{x}^*)^2$, the lower bound of $\log\Lambda$ can be obtained by the following calculation,
\begin{equation}
\begin{split}
\log\Lambda&=\log\int_Me^{-\beta f(\vec{x})}d\vol
\\
&=-\beta f(\vec{x}^*)+\log\int_Me^{\beta (f(\vec{x}^*)-f(\vec{x}))}d\vol
\\
&\ge -\beta f(\vec{x}^*)+\log\int_Me^{-\frac{\beta M}{2}d(\vec{x},\vec{x}^*)^2}d\vol.
\end{split}
\end{equation}
Note that showing the boundedness of the integral $\int_M\exp\left(-\frac{\beta M}{2}d(\vec{x},\vec{x}^*)^2\right)d\vol$ can be reduced to the boundedness of $\int_M\exp\left(-\frac{\beta M}{2}d(\vec{1}_n,\vec{x})^2\right)d\vol$ by a translation on $M$. The translation is defined with parallel transport. Suppose $\gamma(t)$ is the geodesic connecting $\vec{1}$ and an arbitrary point $\vec{y}$, $\vec{v}$ is the vector in the tangent space at $\vec{1}$ such that $\Exp_{\vec{1}}(\vec{v})=\vec{x}^*$. Let $\Gamma_{\vec{1}}^{\vec{y}}\vec{v}$ be the parallel transport of $\vec{v}$ along $\gamma(t)$, and define the image of $\vec{y}$ to be the point $\Exp_{\vec{y}}(\Gamma_{\vec{1}}^{\vec{y}}\vec{v})$.  Lemma \ref{lemma:distance} provides us the value of $\int_M\exp\left(-\frac{\beta M}{2}d(\vec{1},\vec{x})^2\right)d\vol$ by letting $c=\frac{\beta M}{2}$, i.e., 
\begin{equation}
\begin{split}
&\int_M\exp\left(-\frac{\beta M}{2}d(\vec{1},\vec{x})^2\right)d\vol
\\
&=\alpha_{n-1}\frac{1}{2}\left(\frac{2}{\beta M}\right)^{\frac{n-1}{2}}\Gamma\left(\frac{n-1}{2}\right)
\\
&-\alpha_{n-1}\frac{\tau(R)}{6(n-1)}\frac{1}{2}\left(\frac{2}{\beta M}\right)^{\frac{n+1}{2}}\Gamma\left(\frac{n+1}{2}\right)
\\
&+\frac{\alpha_{n-1}}{360(n-1)(n+1)}(-3\norm{R}^2+8\norm{\rho(R)}^2+5\tau(R)^2-18\Delta R)\frac{1}{2}\left(\frac{2}{\beta M}\right)^{\frac{n+3}{2}}\Gamma\left(\frac{n+3}{2}\right)
\\
&+\alpha_{n-1}\left(\frac{2}{\beta M}\right)^{\frac{n+5}{2}}\Gamma\left(\frac{n+5}{2}\right)O(1),
\end{split}
\end{equation}
By denoting $\text{poly}\left(\frac{1}{\beta}\right)$ the right hand side for short, we have
\[
\log\Lambda\ge-\beta f(\vec{x}^*)+\log\left(\text{poly}\left(\frac{1}{\beta}\right)\right).
\]
Dividing both side by $\beta$ and rearranging, we have
\[
\frac{\log \Lambda}{\beta}\ge -f(\vec{x}^*)+\frac{1}{\beta}\log\left(\text{poly}\left(\frac{1}{\beta}\right)\right)
\]
and 
\[
-f(\vec{x}^*)\le\frac{\log\Lambda}{\beta}-\frac{1}{\beta}\log\left(\text{poly}\left(\frac{1}{\beta}\right)\right)=\frac{\log\Lambda}{\beta}+\frac{1}{\beta}\log\left(\text{poly}\left(\frac{1}{\beta}\right)^{-1}\right).
\]
Combined with $\mathbb{E}_{\nu}f=\frac{h(\nu)}{\beta}-\frac{\log\Lambda}{\beta}$ that has been proven before, we have 

\[
\mathbb{E}_{\nu}f-f(\vec{x}^*)\le \frac{h(\nu)}{\beta}+\frac{1}{\beta}\log\left(\text{poly}\left(\frac{1}{\beta}\right)^{-1}\right)\le\frac{K}{\beta}+\frac{1}{\beta}\log\left(\text{poly}\left(\frac{1}{\beta}\right)^{-1}\right).
\]
\end{proof}

\begin{lemma}\label{lemma:distance}
Let $M$ be the Shahshahani manifold, $\vec{1}:=(\frac{1}{n},...,\frac{1}{n})^{\top}$. Then for any $c>0$, the integral of $e^{-cd(\vec{1},\vec{x})^2}$ over $M$ with respect to the volume form induced by Shahshahani metric is bounded, i.e.,
\[
\int_Me^{-cd(\vec{1},\vec{x})^2}d\vol
\]
is bounded.
\end{lemma}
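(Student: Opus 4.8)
The plan is to exploit the classical fact that the open simplex equipped with the Shahshahani (Fisher) metric is isometric to a bounded piece of a round sphere, which makes both the integrand and the domain tame. Concretely, I would introduce the substitution $u_i = 2\sqrt{x_i}$, i.e.\ the map $\phi:\vec{x}\mapsto(2\sqrt{x_1},\dots,2\sqrt{x_n})$. The constraint $\sum_i x_i = 1$ becomes $\sum_i u_i^2 = 4$, so $\phi$ carries the interior of the simplex diffeomorphically onto the open positive orthant $\{u:\sum_i u_i^2 = 4,\ u_i>0\}$ of the sphere $S^{n-1}(2)$ of radius $2$ in $\mathbb{R}^n$.

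First I would verify that $\phi$ is an isometry onto its image. Since $dx_i=\tfrac12 u_i\,du_i$, the Shahshahani line element transforms as $\sum_i\frac{1}{x_i}dx_i^2=\sum_i\frac{4}{u_i^2}\cdot\frac14 u_i^2\,du_i^2=\sum_i du_i^2$, while the constraint $\sum_i dx_i=0$ becomes $\sum_i u_i\,du_i=0$, which is exactly tangency to the sphere. Hence the Shahshahani metric pulls back to the round metric on $S^{n-1}(2)$, and $(M,g)$ is isometric to a relatively compact region of that sphere: its total volume $\vol(M)$ is finite (bounded by $\vol(S^{n-1}(2))$), and its diameter is finite. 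A short computation shows the image lies inside the geodesic ball of radius $2\arccos(1/\sqrt{n})<\pi$ about the image of $\vec{1}$, well within the injectivity radius $2\pi$ of $S^{n-1}(2)$; in particular $d(\vec{1},\vec{x})$ is a bounded, smooth function on $M\setminus\{\vec{1}\}$.

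With these two facts the boundedness is immediate: since $e^{-c d(\vec{1},\vec{x})^2}\le 1$ for every $c>0$,
\[
\int_M e^{-c d(\vec{1},\vec{x})^2}\,d\vol \;\le\; \int_M 1\,d\vol \;=\; \vol(M)\;<\;\infty .
\]
So the bare statement costs almost nothing once the isometry is in place, and no curvature information is needed.

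The subtler point — and the reason the surrounding argument records a curvature-dependent expansion rather than a crude bound — is obtaining the explicit value as a series in $1/\beta$ (with $c=\beta M/2$) used in the proof of Lemma~\ref{lemma:convergence2}. For that I would pass to geodesic polar coordinates centred at $\vec{1}$, writing $d\vol=J(r,\theta)\,dr\,d\theta$ with $d(\vec{1},\vec{x})=r$ and $J$ the volume density; because $M$ sits inside the injectivity radius this chart covers all of $M$. Expanding $J(r,\theta)$ in powers of $r$ via the standard volume-density (Gauss--Jacobi) expansion produces the curvature corrections $\tau(R)$, $\norm{R}^2$, $\norm{\rho(R)}^2$, and each monomial $r^{k}$ is integrated against the Gaussian using $\int_0^\infty r^{k}e^{-cr^2}dr=\tfrac12 c^{-(k+1)/2}\Gamma(\tfrac{k+1}{2})$, the angular integration contributing the sphere-area constant $\alpha_{n-1}$. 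The main obstacle I anticipate is the bookkeeping for the finite radial cutoff: the geodesic ball does not fill all of $\mathbb{R}^{n-1}$, so replacing $\int_0^{R(\theta)}$ by $\int_0^\infty$ must be justified through the exponentially small Gaussian tail, and the curvature coefficients must be evaluated at $\vec{1}$ using the explicitly computable Christoffel symbols of the Shahshahani metric. For the stated lemma, however, the compactness of the spherical image alone suffices.
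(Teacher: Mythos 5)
Your proof is correct for the stated lemma, but it takes a genuinely different route from the paper's. The paper computes $d(\vec{1},\vec{x})^2=n\sum_i(\ln x_i-\ln n)^2$ from the Hofbauer--Sigmund map $\Exp_{\vec{x}}(\vec{v})=(x_ie^{v_i}/S)$, treats $M$ as a non-compact manifold carrying geodesics of infinite length, passes to geodesic polar coordinates, inserts Gray's curvature expansion for the volume of the geodesic sphere $S_{n-2}(t)$, and integrates each monomial $t^k e^{-ct^2}$ over $t\in[0,\infty)$ to land on an explicit polynomial in $1/c$. You instead invoke the classical isometry $\vec{x}\mapsto 2\sqrt{\vec{x}}$ of the Fisher/Shahshahani simplex onto the open positive orthant of $S^{n-1}(2)$, from which finite volume and finite diameter are immediate, and then simply bound $e^{-cd(\vec{1},\vec{x})^2}\le 1$; your metric computation $\sum_i x_i^{-1}dx_i^2=\sum_i du_i^2$ with $\sum_i u_i\,du_i=0$ is right, and the image does sit inside the injectivity radius. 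What each approach buys: yours is shorter, more robust, and settles boundedness with no curvature input; the paper's yields the explicit $\mathrm{poly}(1/\beta)$ expression that Lemma~\ref{lemma:convergence2} actually consumes, which your crude bound does not supply (you correctly defer that to the polar-coordinate expansion). Two points deserve emphasis. First, your picture and the paper's are in tension about the global geometry: if the simplex is isometric to a bounded spherical cap, geodesics reach the boundary in finite length and the distance is $2\arccos\bigl(\sum_i\sqrt{x_iy_i}\bigr)$, not $\sqrt{n\sum_i(\ln x_i-\ln n)^2}$ --- the map $(x_ie^{v_i}/S)$ is the information-geometric (mirror) exponential, not the metric exponential, so it does not parametrize Shahshahani geodesics by arc length. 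The standard fact is on your side here. Second, the radial-cutoff issue you flag is real: Gray's expansion is an asymptotic expansion valid for small $t$, and replacing the finite geodesic range by $\int_0^\infty$ and truncating the series as if it were exact is exactly the step in the paper's argument that would need the justification you describe. For the lemma as stated, your compactness argument suffices and is cleaner.
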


\begin{proof}
The distance between $\vec{1}_n$ and any other point $\vec{x}$ on $M$ can be computed by the exponential map on Shahshahani manifold. Recall that the exponential map on $M$ at $\vec{x}$ is given by
\[
\Exp_{\vec{x}}(\vec{v})=\left(\frac{x_1e^{v_1}}{S},...,\frac{x_ne^{v_n}}{S}\right)
\]
where $S=\sum_jx_je^{v_j}$. Consider the tangent space at $\vec{1}_n$, i.e., $T_{\vec{1}_n}M$, for any $\vec{x}\in M$, there exists a unique $\vec{v}\in T_{\vec{1}_n}M$ such that $\Exp_{\vec{1}_n}(\vec{v})=\vec{x}$. This can be done by solving equations given as follows, 
\[
\left(\frac{e^{v_1}}{S},...,\frac{e^{v_n}}{S}\right)=(x_1,...,x_n).
\]
We have $\frac{e^{\sum v_i}}{S^n}=\prod_{i=1}^n x_i$. Thus $S=\left(\frac{1}{\prod_{i=1}^nx_i}\right)^{\frac{1}{n}}$, and then
$
v_i=\ln x_iS$, for all $i\in[n]$. 

Furthermore, if the exponential map is defined on $T_{\frac{1}{n}\vec{1}_n}M$, $S=\frac{1}{n}$, and then $v_i=\ln x_i-\ln n$ for all $i\in[n]$. The distance $d(\frac{1}{n}\vec{1}_n,\vec{x})$ between $\frac{1}{n}\vec{1}_n$ and $\vec{x}$ can be obtained by evaluating the Shahshahani length of the vector $\vec{v}$ at $\frac{1}{n}\vec{1}_n$, where $\vec{v}$ is the one satisfying $\Exp_{\frac{1}{n}\vec{1}_n}(\vec{v})=\vec{x}$. Since we already have the expression of each component of $\vec{v}$ with respect to the exponential map on the tangent space at $\frac{1}{n}\vec{1}_n$, the square of the distance $d(\frac{1}{n}\vec{1}_n,\vec{x})$ can be computed as follows,
\begin{equation}
d\left(\frac{1}{n}\vec{1}_n,\vec{x}\right)^2=\norm{\vec{v}}_{\frac{1}{n}\vec{1}_n}^2=(v_1,...,v_2)\left(
\begin{array}{ccc}
n&&
\\
&\ddots&
\\
&&n
\end{array}\right)
\left(
\begin{array}{c}
v_1
\\
\vdots
\\
v_n
\end{array}
\right)=n\sum_{i=1}^nv_i^2=n\sum_{i=1}^n(\ln x_i-\ln n)^2.
\end{equation}
Therefore, the integral $\int_Me^{-cd(\frac{1}{n}\vec{1}_n,\vec{x})^2}d\vol$ can be explicitly written as
\[
\int_M\exp\left(-cd\left(\vec{1},\vec{x}\right)^2\right)d\vol=\int_M\exp\left(-cn\sum_{i=1}^n(\ln x_i-\ln n)^2\right)d\vol.
\]
Since the volume form $d\vol$ is induced from the Shahshahani metric on $M$, which is not compact. In fact, each geodesic of infinite length can be embedded into $M$, i.e., for any $\vec{v}\in T_{\frac{1}{n}\vec{1}_n}M$, its image $\left(\frac{x_1e^{tv_1}}{S},...,\frac{x_ne^{tv_n}}{S}\right)$ converges asymptotically to a point on the boundary of simplex as $t\rightarrow \infty$. The function $\exp\left(-cd\left(\vec{1},\vec{x}\right)^2\right)$ can be integrated along any geodesic starting from $\vec{1}$. In order to estimate the integral presented in the beginning, we consider $\vec{x}$ is obtained by mapping $t\vec{v}\in T_{\vec{1}}M$ to $M$, where $\vec{v}$ has Shahshahani norm of $1$ at $\frac{1}{n}\vec{1}_n$. Then the integral can be written in terms of $t$ and $\vec{v}$, which is a polar coordinate system in Shahshahani manifold. Therefore, the integral is computed by an integration over the geodesic from $\vec{1}$ followed by an integration over a sphere $\vec{v}\in S^{n-2}$. Let's elaborate it as follows by denoting $\gamma(t)$ the geodesic with initial velocity $\vec{v}$.

\begin{equation}
\begin{split}
\int_M\exp\left(-cd\left(\vec{1},\vec{x}\right)^2\right)d\vol&=\int_0^{\infty}\left(\int_{S_{n-2}(t)}e^{-ct^2d(\vec{1},\vec{v})^2}dS_{n-2}(t)\right)\norm{\gamma'(t)}_{\gamma(t)}dt
\end{split}
\end{equation}
Note that the integral inside can be computed as follows,
\[
\int_{S_{n-2}(t)}e^{-ct^2d(\vec{1},\vec{v})^2}dS_{n-2}(t)=e^{-ct^2d(\vec{1},\vec{v})^2}\text{Volume}(S_{n-2}(t)),
\]
and we need some notations to estimate the volume of $S_{n-2}(t)$. Let $R$ be Riemannian curvature tensor on $n-1$ dimensional manifold $M$, from \cite{Alfred} we can write
\[
\tau(R)=\sum_{i=1}^{n-1}R_{ii}, \ \ \ \norm{R}^2=\sum_{i,j,k,l=1}^{n-1}R_{ijkl}^2,
\]
\[
\norm{\rho(R)}^2=\sum_{i,j=1}^{n-1}R_{ij}^2,\ \ \ \Delta R=\text{Laplacian of }R=\sum_{i=1}^n\nabla_{ii}^2\tau(R).
\]

Denote
\[
\alpha_{n-1}=\frac{2\Gamma(\frac{1}{2})^{n-1}}{\Gamma(\frac{n-1}{2})},
\]
and then
\begin{equation}
\begin{split}
\text{Volume}(S_{n-2}(t))&=\alpha_{n-1}t^{n-2}\left(1-\frac{\tau(R)}{6(n-1)}t^2+\right.
\\
&\left.\frac{1}{360(n-1)(n+1)}\left(-3\norm{R}^2+8\norm{\rho(R)}^2+5\tau(R)^2-18\Delta R\right)t^4+O(t^6)\right).
\end{split}
\end{equation}
We are now ready to compute the integral
\begin{equation}
\begin{split}
\int_Me^{-cd(\vec{1},\vec{x})^2}d\vol&=\int_0^{\infty}e^{-ct^2}\text{Volume}(S_{n-2}(t))dt
\\
&=\int_0^{\infty}e^{-ct^2}\alpha_{n-1}t^{n-2}dt
\\
&-\int_0^{\infty}e^{-ct^2}\alpha_{n-1}\frac{\tau(R)}{6(n-1)}t^ndt
\\
&+\int_0^{\infty}\frac{\alpha_{n-1}}{360(n-1)(n+1)}\left(-3\norm{R}^2+8\norm{\rho(R)}^2+5\tau(R)^2-18\Delta R\right)t^{n+2}e^{-ct^2}dt
\\
&+\int_0^{\infty}\alpha_{n-1}t^{n-2}O(t^6)e^{-ct^2}dt
\end{split}
\end{equation}
It is immediate to calculate more general integral in the form of 
\[
\int_0^{\infty}e^{-ct^2}t^kdt
\]
by writing $r=ct^2$, the above integral equals
\begin{equation}
\begin{split}
\frac{1}{2}\left(\frac{1}{c}\right)^{\frac{k+1}{2}}\int_0^{\infty}e^{-r}r^{\frac{k-1}{2}}dr&=\frac{1}{2}\left(\frac{1}{c}\right)^{\frac{k+1}{2}}\int_0^{\infty}e^{-r}r^{\frac{k+1}{2}-1}dr
\\
&=\frac{1}{2}\left(\frac{1}{c}\right)^{\frac{k+1}{2}}\Gamma\left(\frac{k+1}{2}\right).
\end{split}
\end{equation}
Thus we have
\begin{equation}
\begin{split}
\int_Me^{-cd(\vec{1},\vec{x})^2}d\vol&=\alpha_{n-1}\frac{1}{2}\left(\frac{1}{c}\right)^{\frac{n-1}{2}}\Gamma\left(\frac{n-1}{2}\right)
\\
&-\alpha_{n-1}\frac{\tau(R)}{6(n-1)}\frac{1}{2}\left(\frac{1}{c}\right)^{\frac{n+1}{2}}\Gamma\left(\frac{n+1}{2}\right)
\\
&+\frac{\alpha_{n-1}}{360(n-1)(n+1)}(-3\norm{R}^2+8\norm{\rho(R)}^2+5\tau(R)^2-18\Delta R)\frac{1}{2}\left(\frac{1}{c}\right)^{\frac{n+3}{2}}\Gamma\left(\frac{n+3}{2}\right)
\\
&+\alpha_{n-1}\left(\frac{1}{c}\right)^{\frac{n+5}{2}}\Gamma\left(\frac{n+5}{2}\right)O(1).
\end{split}
\end{equation}

The proof completes.

\end{proof}

The next proposition shows that one can obtain a complete analogy of the conditions in \cite{RRT17}. 

\begin{proposition}\label{lemma:bound of f}
Suppose function $f$ satisfies Assumptions 1-5. It holds that
\[
\norm{\grad f(\vec{x})}_{\vec{x}}\le \frac{M}{2}d(\vec{1},\vec{x})+B
\]
and 
\[
\frac{m}{2}d(\vec{1},\vec{x})^2-\frac{b}{2}\ln 3\le f(\vec{x})\le A+\frac{M}{2}d(\vec{1},\vec{x})^2+Bd(\vec{1},\vec{x}).
\]
\end{proposition}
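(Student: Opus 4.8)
The proposition splits into three claims, which I would handle separately. The gradient bound $\norm{\grad f(\vec{x})}\le\frac{M}{2}d(\vec{1},\vec{x})+B$ is in fact already part of the hypotheses collected in Assumption \ref{A2}, so for that piece I would simply invoke Assumption \ref{A2}. If one instead wants to derive a growth bound from the geodesic $M$-smoothness inequality alone, the plan is to take the base point to be $\vec{1}$ and set $\vec{y}=\vec{x}$, so that $\norm{\grad f(\vec{x})-\Gamma_{\vec{1}}^{\vec{x}}\grad f(\vec{1})}\le M d(\vec{1},\vec{x})$, and then apply the triangle inequality together with the fact that parallel transport is a linear isometry, giving $\norm{\Gamma_{\vec{1}}^{\vec{x}}\grad f(\vec{1})}=\norm{\grad f(\vec{1})}\le B$ by Assumption \ref{A1}. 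This already produces a bound of the stated form, and I would quote the sharper constant from Assumption \ref{A2}.

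For the upper bound I would use the Riemannian ``descent lemma'' that accompanies $M$-smoothness. Let $\gamma\colon[0,1]\to M$ be the constant-speed geodesic with $\gamma(0)=\vec{1}$ and $\gamma(1)=\vec{x}$, so $\norm{\gamma'(0)}=d(\vec{1},\vec{x})$. Writing $f(\vec{x})-f(\vec{1})=\int_0^1\langle\grad f(\gamma(t)),\gamma'(t)\rangle\,dt$ and isolating the $t=0$ term, I would bound the remaining integrand $\langle\Gamma_{\gamma(t)}^{\vec{1}}\grad f(\gamma(t))-\grad f(\vec{1}),\gamma'(0)\rangle$ by $M\,d(\vec{1},\gamma(t))\cdot\norm{\gamma'(0)}=Mt\,d(\vec{1},\vec{x})^2$, using Cauchy--Schwarz, the isometry of parallel transport, Assumption \ref{A2}, and the fact that geodesic velocity is parallel, so $\gamma'(t)=\Gamma_{\vec{1}}^{\gamma(t)}\gamma'(0)$. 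Integration gives $\frac{M}{2}d(\vec{1},\vec{x})^2$; the isolated term is $\langle\grad f(\vec{1}),\gamma'(0)\rangle\le B\,d(\vec{1},\vec{x})$ by Cauchy--Schwarz and Assumption \ref{A1}; and $f(\vec{1})\le A$. Summing yields $f(\vec{x})\le A+B\,d(\vec{1},\vec{x})+\frac{M}{2}d(\vec{1},\vec{x})^2$. This is exactly the geodesic mean-value mechanism already used in the proof of Lemma \ref{lemma:convergence1}, so no new machinery is required.

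The lower bound is where the genuine work lies, and it is the manifold transcription of the Raginsky--Rakhlin--Telgarsky dissipativity argument. Let $\gamma$ now be the unit-speed geodesic from $\vec{1}$ to $\vec{x}$ on $[0,L]$ with $L=d(\vec{1},\vec{x})$, and set $\phi(s)=f(\gamma(s))$. Since every initial segment of a minimizing geodesic is itself minimizing, $d(\vec{1},\gamma(s))=s$ and the unit velocity at $\gamma(s)$ is precisely the vector $\vec{v}$ of Assumption \ref{A3}, so that assumption applied at $\gamma(s)$ reads $s\,\phi'(s)=\langle\grad f(\gamma(s)),s\gamma'(s)\rangle\ge m s^2-b$, i.e. $\phi'(s)\ge m s-b/s$. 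The obstacle is the $b/s$ singularity at the center $\vec{1}$, which prevents integrating down to $s=0$; this is the main difficulty and the reason the final constant is not clean. The plan is to integrate from a strictly positive radius $a\in(0,L]$ up to $L$, obtaining $\phi(L)\ge\phi(a)+\frac{m}{2}(L^2-a^2)-b\ln(L/a)$, and then discard $\phi(a)\ge0$ using the nonnegativity of $f$ from Assumption \ref{A1}. Choosing $a=L/\sqrt{3}$ turns the logarithmic term into $b\ln\sqrt{3}=\frac{b}{2}\ln 3$, which is exactly the constant in the statement, while the surviving quadratic term $\frac{m}{2}(L^2-a^2)$ is a positive multiple of $m\,d(\vec{1},\vec{x})^2$. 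Pinning down the precise coefficient on the quadratic term after this balancing is the delicate bookkeeping step; everything else reduces to Cauchy--Schwarz and integration along geodesics.
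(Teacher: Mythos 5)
Your proposal follows essentially the same route as the paper's own proof: the gradient bound is read off from Assumption 2, the upper bound comes from integrating the gradient growth bound (equivalently, your descent-lemma variant) along the geodesic from $\vec{1}$ to $\vec{x}$, and the lower bound integrates the dissipativity inequality along the unit-speed geodesic outward from the cutoff radius $d(\vec{1},\vec{x})/\sqrt{3}$, discarding the nonnegative value of $f$ at the cutoff. One remark on the ``delicate bookkeeping'' you flag: with $a=L/\sqrt{3}$ the surviving quadratic coefficient is $\frac{m}{2}\bigl(1-\frac{1}{3}\bigr)=\frac{m}{3}$, and the paper's own computation likewise arrives at $\frac{m}{3}d(\vec{1},\vec{x})^2-\frac{b}{2}\ln 3$ before asserting the stated bound, so the $\frac{m}{2}$ in the proposition appears to be a typo (the Euclidean antecedent in Raginsky--Rakhlin--Telgarsky also gives $\frac{m}{3}$); your argument, like the paper's, proves the statement with $\frac{m}{3}$ in place of $\frac{m}{2}$.
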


\begin{proof}
Direct estimate gives the bound of gradient 
\[
\norm{\grad f(\vec{x})}_{\vec{x}}\le\frac{M}{2}d(\vec{1},\vec{x})+B.
\]
Suppose $\gamma(t)$ is the geodesic connecting $\vec{1}$ and $\vec{x}$. Using fundamental theorem of calculus along geodesic, we have
\[
f(\vec{x})-f(\vec{1})=\int_0^1\langle\grad f(\gamma(t)),\gamma'(t)\rangle_{\gamma(t)}dt
\]
where $\langle\cdot,\cdot\rangle_{\gamma(t)}$ is the Riemannian metric at $\gamma(t)$. By assumption that $f(\vec{1})\le A$, we have
\begin{equation}
\begin{split}
f(\vec{x})&=f(\vec{1})+\int_0^1\langle\grad f(\gamma(t)),\gamma'(t)\rangle_{\gamma(t)}dt
\\
&\le A+\int_0^1\langle\grad f(\gamma(t)),\gamma'(t)\rangle_{\gamma(t)}dt
\\
&\le A+\int_0^1\norm{\grad f(\gamma(t))}_{\gamma(t)}\cdot\norm{\gamma'(t)}_{\gamma(t)}dt
\\
&\le A+\int_0^1\left(\frac{M}{2}d(\vec{1},\gamma(t))+B\right)\norm{\gamma'(t)}_{\gamma(t)}dt
\\
&\le A+\int_0^1\left(\frac{M}{2}d(\vec{1},\vec{x})+B\right)\norm{\gamma'(t)}_{\gamma(t)}dt
\\
&=A+\left(\frac{M}{2}d(\vec{1},\vec{x})+B\right)\int_0^1\norm{\gamma'(t)}_{\gamma(t)}dt
\\
&=A+\left(\frac{M}{2}d(\vec{1},\vec{x})+B\right)d(\vec{1},\vec{x})
\\
&=A+\frac{M}{2}d(\vec{1},\vec{x})^2+Bd(\vec{1},\vec{x}).
\end{split}
\end{equation}
To show the other inequality, we consider the geodesic $\gamma(t)$ connecting $\vec{1}$ and $\vec{x}$, suppose $c\in[0,1]$ is a point on $\gamma(t)$. For that we can use more geometric features of geodesic, we assume the geodesic to be of constant speed, i.e., $\norm{\gamma'(t)}_{\gamma(t)}=\norm{\vec{v}}_{\vec{1}}=d(\vec{1},\vec{x})$, and then the length of the geodesic connecting $\vec{1}$ and $\vec{x}$ equals $\norm{\vec{v}}_{\vec{1}}=d(\vec{1},\vec{x})$. We have that
\[
f(\vec{x})=f(\gamma(c))+\int_c^{1}\langle f(\gamma(t)),\gamma'(t)\rangle_{\gamma(t)}dt
\]
where $f(\gamma(c))\ge 0$. By the dissipative assumption, i.e.,
\[
\langle\grad f(\gamma(t)),\frac{\gamma'(t)}{\norm{\gamma'(t)}_{\gamma(t)}}d(\vec{1},\gamma(t))\rangle_{\gamma(t)}\ge md(\vec{1},\gamma(t))^2-b
\]
Then we can have the following estimate,
\begin{equation}
\begin{split}
f(\vec{x})&\ge\int_c^{1}\langle\grad f(\gamma(t)),\gamma'(t)\rangle_{\gamma(t)}dt
\\
&=\int_c^1\langle\grad f(\gamma(t)),\frac{\gamma'(t)}{\norm{\gamma(t)}_{\gamma(t)}}d(\vec{1},\gamma(t))\rangle_{\gamma(t)}\frac{\norm{\gamma'(t)}_{\gamma(t)}}{d(\vec{1},\gamma(t))}dt
\\
&\ge\int_c^1\left(md(\vec{1},\gamma(t))^2-b\right)\frac{\norm{\gamma'(t)}_{\gamma(t)}}{d(\vec{1},\gamma(t))}
\\
&=\int_c^1\left(mt^2\norm{\vec{v}}_{\vec{1}}^2-b\right)\frac{\norm{\vec{v}}_{\vec{1}}}{t\norm{\vec{v}}_{\vec{1}}}dt
\\
&=\int_c^1\left(mt\norm{\vec{v}}_{\vec{1}}^2-\frac{b}{t}\right)dt
\\
&=\frac{m(1-c^2)}{2}\norm{\vec{v}}_{\vec{1}}^2+b\ln c
\\
&=\frac{m(1-c^2)}{2}d(\vec{1},\vec{x})^2+b\ln c.
\end{split}
\end{equation}
Taking $c=\frac{1}{\sqrt{3}}$, we have 
\[
\frac{m}{2}d(\vec{1},\vec{x})^2-\frac{b}{2}\ln 3\le f(\vec{x})\le A+\frac{M}{2}d(\vec{1},\vec{x})^2+Bd(\vec{1},\vec{x}).
\]
The proof completes.

\end{proof}

\end{document}